\theoremstyle{plain}
\newtheorem{theorem}{Theorem}[section]
\newtheorem{corollary}[theorem]{Corollary}
\newtheorem{lemma}[theorem]{Lemma}
\newtheorem{proposition}[theorem]{Proposition}
\newtheorem{conjecture}[theorem]{Conjecture}
\newtheorem{speculation}[theorem]{Speculation}
\theoremstyle{definition}
\newtheorem{definition}[theorem]{Definition}
\newtheorem{example}[theorem]{Example}
\theoremstyle{remark}
\newtheorem{remark}[theorem]{Remark}
\newcommand\cA{\mathcal{A}}
\newcommand\cU{\mathcal{U}}
\newcommand\CC{\mathbb{C}}
\newcommand\GG{\mathbb{G}}
\newcommand\QQ{\mathbb{Q}}
\newcommand\ZZ{\mathbb{Z}}
\newcommand\bfe{\mathbf{e}}
\DeclareMathOperator{\Br}{Br}
\DeclareMathOperator{\id}{id}
\DeclareMathOperator{\Pic}{Pic}
\title[A twisted derived category]{A twisted derived category of hyper-Kähler varieties of $K3^{[n]}$-type}
\date{\today}
\begin{document}
\author[R. Zhang]{Ruxuan Zhang}
\address{Fudan University}
\email{rxzhang18@fudan.edu.cn}

\begin{abstract}
We conjecture that a natural twisted derived category of any hyper-K\"ahler variety of $K3^{[n]}$-type is controlled by its Markman-Mukai lattice. We prove the conjecture under numerical constraints, and our proof relies on Markman's projectively hyperholomorphic bundle and a recently proven twisted version of the D-equivalence conjecture.

In particular, we prove that any two fine moduli spaces of stable sheaves on a $K3$ surface are derived equivalent if they have the same dimension. 
\end{abstract}
\maketitle
\setcounter{tocdepth}{1}

\tableofcontents

\section{Introduction}
 Throughout,we work over the field of complex numbers $\CC$.
Let $X$ be any hyper-K\"ahler variety of $K3^{[n]}$-type with $n\geq 2$. A general philosophy predicts that $X$ is a moduli space of stable objects on some $K3$ category $\cA$, which is expected to be governed by the Markman-Mukai lattice
$L(X)$.

A classical example supporting this perspective arises when $X$ is the Hilbert scheme of $n$ points on a $K3$ surface $S$.
In this case, the associated $K3$ category is simply $D^b(S)$ and
there exists a Hodge isometry \[L(X)\cong \widetilde{H}(S,\ZZ)\]
which identifies the Markman-Mukai lattice $L(X)$ with the Mukai lattice $\widetilde{H}(S,\ZZ)$ of $S$.
By the derived Torelli theorem, the bounded derived category $D^b(S)$ is determined by the Mukai lattice. Furthermore, the Bridgeland-King-Reid (BKR) equivalence implies that 
$D^b(X)$ is completely determined by $D^b(S)$. Consequently, $D^b(X)$ is also controlled by the Mukai lattice $\widetilde{H}(S,\ZZ)$. A more recent example of this phenomenon can be found in \cite{BH24}.

In the general setting, the existence of the $K3$ category $\cA$ remains an open question.
Nevertheless, one may still ask whether the derived category $D^b(X)$ is controlled by the Markman-Mukai lattice $L(X)$. 
If true, this would provide supporting evidence for both the existence of $\cA$ and the possibility of constructing 
$D^b(X)$ from $\cA$. However, a direct attempt at establishing this connection does not hold in full generality—see \cite[Theorem 5.13]{mattei2024obstruction} for a counterexample. 
Instead, we propose a refinement by considering a natural twisted derived category of $X$. To be more precise, we have the following speculation:
\begin{speculation}\label{spec:main}
    Let $X,~Y$ be hyper-K\"ahler varieties of $K3^{[n]}$-type with $n\geq 2$.
    \begin{enumerate}
        \item There exists a $K3$ category $\cA_X$ such that $X$ is a moduli space of stable objects in $\cA_X$.
        \item 
        If there is a Hodge isometry $L(X)\cong L(Y)$, then we have \[\cA_X\cong \cA_Y {\rm ~or~}\cA_X\cong \cA_Y^{\rm op}\]
        \item There exists a canonical Brauer class $\theta_X$  such that \[{\rm Sym}^n\cA_X\cong D^b(X,\pm n\theta_X).\]
    \end{enumerate}
\end{speculation}
The existence of the $K3$ category has been studied by Markman and Mehrotra in \cite{markman2015integral} and by Chunyi Li, E.Macri, A.Perry, P. Stellari, and Xiaolei Zhao recently.  

We observe that $X$ naturally arises as a moduli space of stable objects not only in the category $\cA_X$ but also in its opposite $\cA_X^{\rm op}$. 
In this paper, we define a natural orientation on $L(X)$ to distinguish between the conjectural category $\cA_X$ and $\cA_X^{\rm op}$. Additionally, we define a natural Brauer class on $X$, which is predicted to be exactly the obstruction preventing $X$ from being a fine moduli space on $\cA_X$. 

We shall briefly introduce the definition of orientation and the Brauer class here.
The orthogonal complement $H^2(X)^\perp \subset L(X)$ is a rank-one lattice generated by a primitive vector whose square is $2n-2$. 
Choosing a generator $v$ (noting that $-v$ is also a generator) induces a natural orientation on $L(X)$. We denote the Markman-Mukai lattice with this orientation by $(L(X),v)$.
Associated with this choice of $v$ is a canonical cohomology class
\[\theta_{v} \in H^2(X,\mu_{2n-2}),\] which is invariant under the discriminant-preserving subgroup of the monodromy group.  Moreover, under a change of orientation, we have $\theta_{-v}=-\theta_v$.
For details, see Section \ref{sec:lattice}. The class $\theta_{v}$ has essentially appeared in \cite[Lemma 7.4]{markman2020beauville}. 

The presence of the canonical class $\theta_v$ implies  that any hyper-K\"ahler variety of $K3^{[n]}$-type naturally carries natural $\mu_{2n-2}$-gerbe structures. This motivates the study of the derived category of twisted sheaves on such gerbes.
In particular, when $X$ is a moduli space of stable sheaves on a $K3$ surface, the Brauer class $[\theta_{v}] \in \Br(X)$ corresponds to the obstruction preventing $X$ from being a fine moduli space (see Lemma \ref{lem:obs} for details).

We now state the main conjecture of this work: the Markman-Mukai lattice $L(X)$ governs the derived category of twisted sheaves on a natural gerbe. This conjecture is both a direct consequence of Speculation \ref{spec:main} and an explicit realization of its broader framework.
\begin{conjecture}\label{conj:main}
Let $X$ and $Y$ be two hyper-K\"ahler varieties of $K3^{[n]}$-type. If there exists a Hodge isometry between oriented Markman-Mukai lattices
\[\phi:(L(X),v)\rightarrow (L(Y),w),\] then there is an equivalence of bounded derived categories
    \[D^b(X,\left[n\theta_{v}\right])\cong D^b(Y,\left[\epsilon n\theta_{ w}\right]).\]
Here $\epsilon=1$ if $\phi$ is orientation-preserving, and $\epsilon=-1$ if $\phi$ is orientation-reversing.
\end{conjecture}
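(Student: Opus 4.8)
The plan is to produce the asserted equivalence from a Fourier–Mukai kernel supplied by Markman's projectively hyperholomorphic bundle, and then to spread it across the entire $K3^{[n]}$-type moduli by deformation invariance together with the twisted D-equivalence theorem. First I would cut down the isometries that must be realized geometrically. The monodromy group acts on $L(X)$, and by the invariance of $\theta_v$ recorded above its discriminant-preserving subgroup fixes the gerbe $[\theta_v]$; parallel transport along loops in a family of $K3^{[n]}$-type varieties should induce twisted auto-equivalences of $D^b(X,[n\theta_v])$ realizing these operators, where one already needs the deformation invariance of the twisted category that hyperholomorphicity provides. Pre- and post-composing $\phi$ with such operators, it then suffices to realize one isometry in each monodromy orbit, and the orientation datum together with the sign $\epsilon$ only records whether the normalized $\phi$ carries the chosen generator $v$ of $H^2(X)^\perp$ to $+w$ or to $-w$.

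The geometric heart is the base case. Here I would use Markman's projectively hyperholomorphic bundle $\cE$ on $X\times Y$ (more precisely, on the product of two fibers of a common deformation family), a twisted sheaf whose projectivization carries a connection compatible with every complex structure in the family and whose Mukai vector realizes the graph of $\phi$. The claim to establish is that the twist of $\cE$ is exactly $[n\theta_v]\boxtimes[-\epsilon n\theta_w]$, so that $\cE$ is the kernel of a functor
\[\Phi_{\cE}\colon D^b(X,[n\theta_v])\longrightarrow D^b(Y,[\epsilon n\theta_w]).\]
To see that $\Phi_{\cE}$ is an equivalence I would specialize $X$ and $Y$, inside their deformation families, to moduli spaces of (twisted) stable sheaves on a single twisted $K3$ surface $(S,\alpha)$; there $\cE$ degenerates to a quasi-universal sheaf, $\Phi_{\cE}$ becomes the classical Fourier–Mukai transform attached to the identification $L(X)\cong\widetilde{H}(S,\alpha,\ZZ)\cong L(Y)$, and its being an equivalence follows from derived Torelli for twisted $K3$ surfaces exactly as in the untwisted Hilbert-scheme/BKR picture. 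The sign $\epsilon$ is precisely where the opposite category $\cA_X^{\mathrm{op}}$ enters: an orientation-reversing $\phi$ realizes $X$ as a moduli space in $\cA_Y^{\mathrm{op}}$, which on $Y$ is seen as the conjugate gerbe $-\theta_w$.

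It then remains to pass from this special fiber to arbitrary $X,Y$. Because hyperholomorphicity makes $\cE$, and hence the isomorphism class of $\Phi_{\cE}$, parallel along the twistor family, the equivalence produced on the special fiber deforms; and to absorb the change of birational model that a general $\phi$ may force—recall that Markman's Torelli theorem realizes a Hodge isometry of $H^2$ only up to a bimeromorphic modification—I would invoke the recently proven twisted D-equivalence conjecture, which identifies the twisted derived categories of birational $K3^{[n]}$-type varieties together with their gerbes. Composing the deformation-transported equivalence with these birational twisted equivalences yields the desired $D^b(X,[n\theta_v])\cong D^b(Y,[\epsilon n\theta_w])$, and specializing to the fine (hence untrivially-twisted, $\theta=0$) case recovers the derived equivalence of equidimensional fine moduli spaces announced in the abstract.

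The main obstacle, which is where the numerical constraints of the theorem enter, is the identification of the Brauer twist of Markman's bundle with $[n\theta_v]\boxtimes[-\epsilon n\theta_w]$ together with the verification that $\Phi_{\cE}$ is fully faithful. The existence and deformation invariance of a projectively hyperholomorphic bundle with the correct primitive Mukai vector is only available in restricted numerical ranges, and tracking the exact order of the gerbe—the factor $n$ in $n\theta_v$, and the sign under orientation reversal—through the deformation and birational steps is delicate. I expect this bookkeeping of discriminant forms and gerbe orders, rather than the formal Fourier–Mukai or derived-Torelli input, to be the part requiring the most care.
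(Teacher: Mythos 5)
First, note that the statement you set out to prove is stated in the paper as a \emph{conjecture}: the paper itself only establishes it under a numerical hypothesis, namely that ${\rm Span}(\phi(v),w)\subset L(Y)$ is a primitive sublattice (Theorem \ref{thm:primitive-embedding}), and it cites \cite{mattei2024obstruction} for why naive general statements in this area fail. So a blind argument claiming the full conjecture should already raise a flag. Your outline does share the paper's two pillars (Markman's projectively hyperholomorphic bundles and the twisted D-equivalence theorem of \cite{MSYZ24}), but the central step is assumed rather than proved: you posit a projectively hyperholomorphic kernel $\cE$ on $X\times Y$ ``whose Mukai vector realizes the graph of $\phi$'' and whose twist is $[n\theta_v]\boxtimes[-\epsilon n\theta_w]$. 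The existence of such a kernel for an \emph{arbitrary} Hodge isometry $\phi\colon L(X)\to L(Y)$ is precisely the open problem. Markman's bundles exist only in a restricted situation: they are built from a universal bundle $\cU$ on $M\times S$ where $M$ is a fine moduli space of stable vector bundles on a $K3$ surface $S$ with an isotropic Mukai vector satisfying the coprimality and non-divisibility condition \eqref{VB}, and they live on $M^{[n]}\times S^{[n]}$, realizing the specific rational isometry $F_\cU\oplus\id_\delta$ on $H^2$ --- not an arbitrary isometry of the Markman-Mukai lattice. The entire technical content of the paper (the twisted extended Mukai lattice of Theorem \ref{thm:lattice}, the $\delta$-classes of Lemma \ref{lem:delta}, the Eichler transvections, Assumption \eqref{eq:key}, and the construction in Theorem \ref{thm:criterion} of auxiliary $K3$ surfaces $S$, $M$ via the period map together with gcd manipulations) is exactly the reduction of a given abstract $\phi$ to a geometrically realizable one; it is in this reduction that the primitive-embedding hypothesis is consumed, via Eichler's criterion, to arrange $\langle\phi(\delta_v).w\rangle=-r$. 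Your plan skips this reduction entirely, so it is circular at its core.

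Two secondary steps also fail as stated. First, your reduction ``to one isometry in each monodromy orbit'' by letting monodromy operators act as twisted autoequivalences of $D^b(X,[n\theta_v])$ is not available: a monodromy operator that does not preserve the Hodge structure changes the complex structure and does not act on the derived category of a fixed $X$ at all, while those that do preserve Hodge structures are (by \cite{Markman11} and Theorem \ref{thm:genD}) already the birational/parallel-transport case handled by \cite{MSYZ24} --- they give no reduction of the general $\phi$. Second, the specialization of both $X$ and $Y$ to moduli spaces of twisted sheaves on a \emph{common} twisted $K3$ surface $(S,\alpha)$ is not possible for general $X,Y$ with isometric Markman-Mukai lattices; if it were, the conjecture would essentially follow from twisted derived Torelli for $K3$ surfaces plus BKR, and no hyperholomorphic deformation theory would be needed. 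The correct mechanism, as in Theorem \ref{thm:summary}, is to deform the kernel from a single pair $(M^{[n]},S^{[n]})$ through non-special fibers of the moduli space $\mathfrak{M}_g$ of marked pairs, using surjectivity of the forgetful maps $\Pi_i$, and then to absorb the remaining discrepancy by a Hodge-preserving parallel transport; your proposal gestures at this but cannot start it without the (unproved) existence of the kernel adapted to $\phi$. Your closing paragraph correctly identifies where the difficulty lies, but identifying the gap is not the same as bridging it: as written, the argument proves neither the conjecture nor the paper's partial result.
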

It is important to note that the identity map of lattices  $(L(X),v)\rightarrow (L(X),-v)$ reverses  orientations. 
Hence, we can reduce to the orientation-preserving case.

A supporting case of this conjecture is the twisted D-equivalence theorem, recently established in \cite{MSYZ24}.
This case also forms the foundation of our approach: reducing the conjecture to lattice-theoretic computations of Markman’s hyperholomorphic bundles \cite{M24}.

The first part of our results provides lattice-theoretic evidence in support of Conjecture \ref{conj:main}. In particular, the Brauer classes $[n\theta_v],~[n\theta_w]$ naturally arise in the relevant lattice computations (see Section \ref{sec:lattice} for details).
Among the possible $B$-field lifts of $\theta_v$, certain choices exhibit more desirable properties. Such a preferred lift is given by \[\frac{\delta_v}{2n-2}\in H^2(X,\QQ)\] for a distinguished class $\delta_v$.
We introduce a twisted extended Mukai lattice (with a natural orientation) and obtain
\begin{theorem}\label{thm:lattice}
    Let $X,~Y,~\phi$ be as in Conjecture \ref{conj:main}. Then there is a Hodge isometry \[\psi: \widetilde{H}(X,\frac{\delta_v}{2n-2},\ZZ)\cong \widetilde{H}(Y, \frac{\delta_w}{2n-2},\ZZ).\] Moreover, $\psi$ is orientation-preserving if and only if $\phi$ is.
\end{theorem}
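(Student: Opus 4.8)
The plan is to produce $\psi$ by transporting $\phi$ through a pair of comparison isometries, one identifying each twisted extended Mukai lattice with the corresponding Markman--Mukai lattice. First I would reduce to the orientation-preserving case. As already observed for $\phi$ itself, the identity $(L(X),v)\to(L(X),-v)$ reverses orientation; on the twisted side it sends $\delta_v\mapsto-\delta_v$ and hence $B_X:=\delta_v/(2n-2)\mapsto-B_X$, so it interchanges the $\epsilon=+1$ and $\epsilon=-1$ cases. Thus it suffices to prove that an orientation-preserving $\phi$ yields an orientation-preserving $\psi$, and the orientation-reversing case follows by factoring $\phi$ through this identity.

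The heart of the argument is a comparison Hodge isometry $\Phi_X\colon \widetilde H(X,\tfrac{\delta_v}{2n-2},\ZZ)\xrightarrow{\ \sim\ }L(X)$, and likewise $\Phi_Y$, after which one simply sets $\psi:=\Phi_Y^{-1}\circ\phi\circ\Phi_X$. To build $\Phi_X$ I would begin from the description in Section \ref{sec:lattice} of the twisted lattice as the $\exp(B_X)$-shear of the standard extended Mukai lattice, and exhibit a rational isometry carrying its integral structure onto that of $L(X)$. The key point is that $\delta_v$ is, by construction, the glue class realizing $L(X)$ as the index-$(2n-2)$ overlattice of $H^2(X,\ZZ)\oplus\ZZ v$; the twist $\exp(\delta_v/(2n-2))$ is engineered precisely to reproduce this gluing at the integral level, while the pairing $(\delta_v,\sigma_X)$ governs the twisted period $\exp(B_X)\sigma_X$ so that $\Phi_X$ intertwines the two weight-two Hodge structures. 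When $X=M_S(\mathbf{v})$ this comparison is Yoshioka's twisted-Mukai isometry $\widetilde H(X,B_X,\ZZ)\cong\widetilde H(S,\ZZ)=L(X)$; in general it is the lattice-theoretic statement I expect to assemble from Section \ref{sec:lattice}.

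Granting $\Phi_X,\Phi_Y$, the composite $\psi$ is automatically a Hodge isometry, so only the orientation claim remains. Here I would use that $\exp(B_X)$ lies in the identity component of the real orthogonal group: the path $\exp(tB_X)$, $t\in[0,1]$, connects it to the identity, so the twist is orientation-neutral and the orientation of the positive-definite part of $\widetilde H(X,\tfrac{\delta_v}{2n-2},\ZZ)$ agrees with that of $L(X)$. Consequently $\Phi_X$ and $\Phi_Y$ can be normalized to match the $v$- and $w$-orientations with the canonical orientations of $L(X)$ and $L(Y)$, whence $\psi$ preserves orientation exactly when $\phi$ does.

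I expect the main obstacle to lie in the construction of $\Phi_X$, specifically its integral and Hodge-theoretic compatibility. The subtlety is that $\phi$ need not send $v$ to $w$ -- it need not preserve the sublattice $H^2(X)=v^\perp$ at all -- so one cannot conjugate the two $B$-field twists directly; the entire content is that the twisted extended Mukai lattice depends on $v$ only through the orientation it induces on $L(X)$. Making this precise is a discriminant-form calculation matching the twist $\exp(\delta_v/(2n-2))$ to the overlattice gluing of $L(X)$, carried out so that it simultaneously respects the Hodge structures and the orientations. This is the step where the canonical definition of $\delta_v$ and the monodromy-invariance of $\theta_v$ recorded in the introduction do the essential work.
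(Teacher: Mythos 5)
Your reduction to the orientation-preserving case is fine, but the central object of your plan cannot exist: there is no isometry $\Phi_X\colon \widetilde{H}(X,\tfrac{\delta_v}{2n-2},\ZZ)\to L(X)$, for rank reasons. By definition $\widetilde{H}(X,B,\ZZ)\cong H^2(X,\ZZ)\oplus U$ has rank $23+2=25$ and signature $(4,21)$, while $L(X)\cong U^{\oplus 4}\oplus E_8(-1)^{\oplus 2}$ has rank $24$ and signature $(4,20)$. The Yoshioka-type isometry you invoke, $\widetilde{H}(M,B,\ZZ)\cong\widetilde{H}(S,\ZZ)$, is a statement about \emph{two-dimensional} moduli spaces, where both sides have rank $24$; it has no analogue for $n\geq 2$. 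The correct identification, and the one the paper's proof rests on, is that $\widetilde{H}(X,\tfrac{\delta_v}{2n-2},\ZZ)$ is the sublattice $v^\perp$ of the rank-$26$ lattice $L(X)\oplus U$, whose Hodge structure has $(2,0)$-part spanned by ${\bfe}^{\delta_v/(2n-2)}(\sigma_X)=\sigma_X+\langle\tfrac{\delta_v}{2n-2}.\sigma_X\rangle f$. So one cannot conjugate $\phi$ by comparison maps into $L(X)$ and $L(Y)$; one must instead extend $\phi$ to $\tilde{\phi}=\phi\oplus\id_U$ on $L(X)\oplus U$ and then force the extension to carry $v$ to $w$, so that it restricts to a map $v^\perp\to w^\perp$.

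That forcing step is exactly the obstacle you flag at the end (``$\phi$ need not send $v$ to $w$''), and your proposal offers no mechanism for it beyond an unspecified ``discriminant-form calculation''; the paper's proof consists essentially of this mechanism. Using $v^2=w^2$ one gets $\langle\phi(v).(\phi(v)-w)\rangle=\tfrac12(\phi(v)-w)^2$, which implies that the Eichler transvection $E_{\tilde{\phi}(v)-w}$ of $L(Y)\oplus U$ satisfies $E_{\tilde{\phi}(v)-w}\circ\tilde{\phi}(v+f)=w+f$. Separately, Lemma \ref{lem:delta}(2) makes ${\bfe}^{(v-\delta_v)/(2n-2)}$ and ${\bfe}^{(\delta_w-w)/(2n-2)}$ \emph{integral} isometries of $L(X)\oplus U$ and $L(Y)\oplus U$, and since $\langle\delta_v.v\rangle=\langle\delta_w.w\rangle=0$ they carry $v\mapsto v+f$ and $w+f\mapsto w$, respectively. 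The composite $\tilde{\psi}={\bfe}^{(\delta_w-w)/(2n-2)}\circ E_{\tilde{\phi}(v)-w}\circ\tilde{\phi}\circ{\bfe}^{(v-\delta_v)/(2n-2)}$ then sends $v$ to $w$, matches the twisted periods, and restricts on $v^\perp$ to the desired $\psi$; the orientation claim follows because shears and Eichler transvections preserve orientation (close in spirit to your path-connectedness remark, which is the one sound part of your final step). Without the passage to $L\oplus U$ and the Eichler correction of $\tilde{\phi}$, the proof does not go through.
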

The twisted extended Mukai lattice up to Hodge isometry is independent of the specific choice of a $B$-field. We prove a derived Torelli-type theorem (Theorem \ref{thm:criterion}) by constructing  Markman's projectively hyperholomorphic bundle and applying the twisted derived equivalence for birational hyper-K\"ahler varieties of $K3^{[n]}$-type. This applies to the following result:
\begin{theorem}\label{thm:primitive-embedding}
Conjecture \ref{conj:main} holds if  \[{\rm Span}\left(\phi(v),w\right)\subset L(Y)\] is a primitive lattice embedding. Here, ${\rm Span}\left(\phi(v),w\right)$ denotes the sublattice generated by integral linear combinations of $\phi(v)$ and $w$.
\end{theorem}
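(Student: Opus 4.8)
The plan is to deduce Theorem~\ref{thm:primitive-embedding} from the derived Torelli-type criterion (Theorem~\ref{thm:criterion}) by verifying its geometric hypotheses in the primitive case, with Markman's projectively hyperholomorphic bundle supplying the Fourier--Mukai kernel. First I would reduce to the orientation-preserving case: since the identity $(L(X),v)\to(L(X),-v)$ reverses orientation and turns $[n\theta_v]$ into $[-n\theta_v]$, precomposing $\phi$ with this involution interchanges the cases $\epsilon=\pm1$, so it suffices to treat orientation-preserving $\phi$ and prove $D^b(X,[n\theta_v])\cong D^b(Y,[n\theta_w])$. By Theorem~\ref{thm:lattice} such a $\phi$ already yields an orientation-preserving Hodge isometry $\psi$ of twisted extended Mukai lattices $\widetilde{H}(X,\tfrac{\delta_v}{2n-2},\ZZ)\cong \widetilde{H}(Y,\tfrac{\delta_w}{2n-2},\ZZ)$, so the lattice-theoretic input of the criterion is in place; what remains is to realize $\psi$ geometrically.

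The geometric heart is the rank-$\leq 2$ sublattice $T:={\rm Span}(\phi(v),w)\subset L(Y)$, both generators being primitive $(1,1)$-classes of square $2n-2$. In the degenerate case $\phi(v)=\pm w$ the span is the saturated line $H^2(Y)^{\perp}$, the isometry identifies the oriented lattices sending $v\mapsto w$, global Torelli makes $X$ and $Y$ birational, and the twisted D-equivalence for birational hyper-K\"ahler varieties of \cite{MSYZ24} gives the claim directly; this recovers the twisted D-equivalence theorem as the rank-one subcase. In the genuinely rank-$2$ case I would exploit the primitivity of $T$: its orthogonal complement $T^{\perp}\subset L(Y)$ carries the induced weight-two Hodge structure of (twisted) K3 type, and primitivity of $T$ is exactly what lets one deform $Y$ inside its moduli while keeping both $\phi(v)$ and $w$ of Hodge type, degenerating to a fibre $Y_0$ that is a moduli space $M_w(S,\alpha)$ of $\alpha$-twisted sheaves on a twisted K3 surface $(S,\alpha)$, with $L(Y_0)\cong\widetilde{H}(S,\alpha,\ZZ)$ carrying $w$ as its defining Mukai vector and $\phi(v)$ as a second primitive Mukai vector $u$. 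Correspondingly $X$ deforms to $M_{u}(S,\alpha)$.

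Over such a twisted K3 the twisted universal family furnishes a Fourier--Mukai equivalence between the twisted derived categories of $M_u(S,\alpha)$ and $M_w(S,\alpha)$, which establishes the equivalence on the special fibre. To propagate it back to the original pair $(X,Y)$ I would invoke Markman's projectively hyperholomorphic bundle: over the total space of the family it deforms the twisted universal sheaf as a hyperholomorphic, hence twistor-parallel, object, so the kernel found on $Y_0$ spreads out to a kernel on $X\times Y$ whose induced functor $D^b(X,[n\theta_v])\to D^b(Y,[n\theta_w])$ is again an equivalence; the residual birational ambiguity between the deformed fibres and $X,Y$ themselves is absorbed by \cite{MSYZ24}.

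The main obstacle I anticipate is bookkeeping the twist: one must verify that the Brauer/gerbe class carried by Markman's bundle is precisely $[n\theta_v]$ on $X$ and $[n\theta_w]$ on $Y$, with the sign dictated by the orientation, and that the bundle remains a locally free twisted sheaf of the expected rank and discriminant throughout the family without jumping. This is exactly where the primitivity of ${\rm Span}(\phi(v),w)$ enters, since it pins down the rank and discriminant of the hyperholomorphic bundle so that the construction does not degenerate, and it is where the numerical matching recorded in Theorem~\ref{thm:lattice} guarantees that the two sides of the kernel carry the required twists.
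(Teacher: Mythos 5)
Your opening framing---reduce to orientation-preserving $\phi$, split off the case $\phi(v)=\pm w$ as the birational case handled by \cite{MSYZ24}, and feed the isometry $\psi$ of twisted extended Mukai lattices into the criterion of Theorem \ref{thm:criterion}---matches the paper. But you then assert that after Theorem \ref{thm:lattice} ``the lattice-theoretic input of the criterion is in place,'' and this is exactly where the argument breaks. Theorem \ref{thm:criterion} does not apply to an arbitrary orientation-preserving $\psi$: its hypothesis \eqref{eq:jiashe} demands a $\delta$-class $\delta_v\in H^2(X,\ZZ)$ such that $\psi(\delta_v)$ lies in $H^2(Y,\ZZ)$ and is again a $\delta$-class with respect to $w$. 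Verifying this is the real content of the paper's proof, and it is precisely where primitivity of ${\rm Span}(\phi(v),w)$ enters: the paper reduces \eqref{eq:jiashe} to the numerical condition $\left<\phi(\delta_v).w\right>=-r$ (Assumption \eqref{eq:key}, with $r=\tfrac12(\phi(v)-w)^2\neq 0$), maps $\phi(v),w$ isometrically onto the explicit vectors $e_1-(n-1)f_1$ and $e_1-(n-1-r)f_1-e_2+rf_2$ in two hyperbolic planes, extends this isometry of \emph{primitive} sublattices to some $g\in O(L(Y))$ by Nikulin's extension theorem \cite[Chapter~14, Corollary~1.9]{Huy16}, and takes $\delta_v=\phi^{-1}(g^{-1}(e_1+(n-1)f_1))$. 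Your proposal never performs any such verification, and the role you assign to primitivity (``pinning down the rank and discriminant of the hyperholomorphic bundle'') is not where it is actually needed.

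The deformation argument you substitute for this verification is, moreover, circular at its key step. You claim that on the special fibre ``the twisted universal family furnishes a Fourier--Mukai equivalence between the twisted derived categories of $M_u(S,\alpha)$ and $M_w(S,\alpha)$.'' No such input is available: a (twisted) universal sheaf lives on $M_w(S,\alpha)\times S$, not on $M_u\times M_w$, and gives no kernel between the two $2n$-dimensional moduli spaces---producing such a kernel is exactly the problem being solved. Indeed, the untwisted instance of your claimed fibre equivalence is essentially Theorem \ref{thm:k3} (answering Huybrechts' question, Corollary \ref{cor:fine}), which in this paper is a \emph{consequence} of Theorem \ref{thm:primitive-embedding}. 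This is why the paper's geometric engine, Theorem \ref{thm:summary}, is set up with an auxiliary untwisted K3 surface $S$ and an \emph{isotropic} Mukai vector $v_0=(r,mH,s)$: then the second moduli space $M$ is itself a K3 surface, the kernel $\cU^{[n]}$ on $M^{[n]}\times S^{[n]}$ exists by conjugating the BKR equivalence, and only afterwards do Markman's hyperholomorphic deformations, Theorem \ref{thm:genD}, and the Brauer-class bookkeeping of Steps 1--5 in the proof of Theorem \ref{thm:criterion} transport the equivalence to the pair $(X,Y)$. Your proposal treats all of this as routine ``spreading out,'' and acknowledges the twist bookkeeping as an unaddressed obstacle, so the gap is not merely one of detail.
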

Next, we examine moduli spaces of stable objects on a $K3$ surface $S$. Let $v$ be a primitive vector in $\widetilde{H}^{1,1}(S,\ZZ)$, and denote by $M_v$ the moduli space of stable objects with respect to any generic stability condition.\footnote{Since moduli spaces associated with different stability conditions are birational, their twisted derived categories are equivalent by \cite{MSYZ24}; see Subsection \ref{subsec:birational} for details.}
Theorem \ref{thm:primitive-embedding} has the following consequence:
\begin{theorem}\label{thm:k3}
     Let $S$ be a projective $K3$ surface and $w$ be a primitive vector in $\widetilde{H}^{1,1}(S,\ZZ)$ satisfying $w^2=2n-2\geq2$. Then there exist derived equivalences \[ D^b(M_w,[ n\theta_{w}])\cong D^b(M_w,[- n\theta_{w}])\cong D^b(S^{[n]}).\] 
\end{theorem}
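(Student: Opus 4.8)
The plan is to deduce Theorem \ref{thm:k3} from the already-proven Theorem \ref{thm:primitive-embedding} by verifying its hypothesis in the specific geometric situation at hand. Recall that $S^{[n]}$ is itself a moduli space of stable sheaves on $S$: concretely, $S^{[n]} = M_{v_0}$ for the Mukai vector $v_0 = (1,0,1-n) \in \widetilde{H}^{1,1}(S,\ZZ)$, which satisfies $v_0^2 = 2n-2$. Thus both $M_w$ and $S^{[n]}$ are fine-or-twisted moduli spaces on the same $K3$ surface $S$, and each carries a Markman-Mukai lattice equipped with a Hodge isometry to the full Mukai lattice $\widetilde{H}(S,\ZZ)$ sending the orthogonal-complement generator to $w$ (respectively $v_0$). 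The first step is to record these identifications $L(M_w) \cong \widetilde{H}(S,\ZZ) \cong L(S^{[n]})$ precisely, together with how the distinguished orthogonal generators $w$ and $v_0$ and the associated classes $\theta_w$, $\theta_{v_0}$ sit inside them, using the lattice-theoretic setup of Section \ref{sec:lattice}.

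The second step is to produce the Hodge isometry $\phi : (L(M_w), w) \to (L(S^{[n]}), v_0)$ needed to apply Theorem \ref{thm:primitive-embedding}. Identifying both Markman-Mukai lattices with $\widetilde{H}(S,\ZZ)$, the natural candidate for $\phi$ is (a sign-adjustment of) the identity on $\widetilde{H}(S,\ZZ)$. Under this identification, $\phi(w)$ is just the vector $w$ viewed inside $L(S^{[n]}) \cong \widetilde{H}(S,\ZZ)$, while the distinguished generator on the target side is $v_0$. The hypothesis of Theorem \ref{thm:primitive-embedding} then requires that $\mathrm{Span}(w, v_0) \subset \widetilde{H}(S,\ZZ)$ be a primitive sublattice embedding. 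This is where the main work lies, and I expect it to be the central obstacle: in general two primitive vectors of the same square in a lattice need not span a primitive rank-two sublattice. I would handle this by choosing the identification cleverly. Because $w$ and $v_0 = (1,0,1-n)$ both have square $2n-2$ and both are primitive, one can invoke the transitivity of the orthogonal (monodromy/isometry) group of the Mukai lattice on primitive vectors of fixed square — available since $\widetilde{H}(S,\ZZ)$ has signature $(2, 20)$ and large rank — to move $w$ to a standard position relative to $v_0$, and verify primitivity of the span directly in that standard model (for instance $v_0 = (1,0,1-n)$ together with a complementary class whose pairing with $v_0$ is coprime to the relevant discriminant, giving primitivity of the span).

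The third step is orientation bookkeeping. Theorem \ref{thm:primitive-embedding} yields $D^b(M_w, [n\theta_w]) \cong D^b(S^{[n]}, [\epsilon\, n\theta_{v_0}])$ with $\epsilon = \pm 1$ according to whether $\phi$ preserves or reverses orientation, and since $S^{[n]} = M_{v_0}$ is a \emph{fine} moduli space the Brauer class $[\theta_{v_0}]$ is trivial by Lemma \ref{lem:obs}, so $D^b(S^{[n]}, [\epsilon\, n\theta_{v_0}]) \cong D^b(S^{[n]})$ regardless of the sign. Applying the theorem once with an orientation-preserving $\phi$ gives $D^b(M_w, [n\theta_w]) \cong D^b(S^{[n]})$, and composing with the identity isometry $(L(M_w), w) \to (L(M_w), -w)$, which reverses orientation as noted after Conjecture \ref{conj:main}, gives the second equivalence $D^b(M_w, [-n\theta_w]) \cong D^b(S^{[n]})$. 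The only genuinely delicate point is the primitivity verification in step two; the remaining identifications and the triviality of the fine-moduli Brauer class are formal consequences of the framework already established.
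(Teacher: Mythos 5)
Your reduction to Theorem \ref{thm:primitive-embedding} is the right frame, and your Steps 1 and 3 (the identifications $L(M_w)\cong\widetilde{H}(S,\ZZ)\cong L(S^{[n]})$, the triviality of $[\theta_{v_0}]$ on the fine moduli space $S^{[n]}$ via Lemma \ref{lem:obs}, and the orientation bookkeeping via $(L(M_w),w)\to(L(M_w),-w)$) agree with the paper. The gap is in Step 2. Theorem \ref{thm:primitive-embedding} requires $\phi$ to be a \emph{Hodge} isometry, and under the canonical identifications both $L(M_w)$ and $L(S^{[n]})$ carry the Hodge structure of $\widetilde{H}(S,\ZZ)$; so $\phi$ must be a Hodge self-isometry of $\widetilde{H}(S,\ZZ)$, i.e.\ it must preserve the line $\CC\sigma$. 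Eichler-type transitivity of $O(\widetilde{H}(S,\ZZ))$ on primitive vectors of fixed square (incidentally, the signature is $(4,20)$, not $(2,20)$) produces isometries that in general do \emph{not} preserve the Hodge structure: any Hodge self-isometry must preserve the transcendental lattice and, for a K3 without CM, acts on it only by $\pm\id$, so the available group is a small arithmetic group acting through the algebraic part $\widetilde{H}^{1,1}(S,\ZZ)$, nowhere near transitive on primitive vectors of fixed square. You therefore cannot ``move $w$ into standard position relative to $v_0$'' by an arbitrary isometry and then feed the result into Theorem \ref{thm:primitive-embedding}; the ``clever choice of identification'' you invoke is exactly the object that needs to be constructed, and transitivity of the full orthogonal group does not supply it.

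This is precisely where the paper does its real work. It restricts to Hodge isometries that \emph{are} available, namely the B-field shifts ${\bfe}^{tH}$ with $H\in\Pic(S)$ (these fix $\sigma$ because $\left<H.\sigma\right>=0$), writes $w=(r,kH,s)$, and proves two arithmetic lemmas: Lemma \ref{lem:primitive}, giving the sufficient condition $\mathrm{gcd}\left((r^2-1)s,k\right)=1$ for $\mathrm{Span}(w,v)\subset\widetilde{H}(S,\ZZ)$ to be primitive, and Lemma \ref{lem:t}, producing a $t\in\ZZ$ (via a prime-value argument on $k'+r't$) such that ${\bfe}^{tH}(w)$ satisfies that condition; the case $r=\pm1$ is handled separately by birationality, where ${\bfe}^{\pm kH}(w)=\pm v$. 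Your proposal omits this number-theoretic step entirely, and without it (or a substitute argument carried out inside the group of Hodge isometries of $\widetilde{H}(S,\ZZ)$) the hypothesis of Theorem \ref{thm:primitive-embedding} is never verified, so the proof does not go through as written.
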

If $v^2=2$ or if $M_w$ is a fine moduli space of stable sheaves, then the Brauer classes $[\pm n\theta_{{w}}]$ vanish. In these cases, we obtain derived equivalences between untwisted categories, thereby providing an affirmative answer to a question posed by Huybrechts:
\begin{corollary}\label{cor:fine}
    Let $S$ be a projective $K3$ surface and $v,w$ be primitive vectors in $\widetilde{H}^{1,1}(S,\ZZ)$ with $v^2=w^2=2n-2\geq 2$. 
   If either $n=2$ or both $M_v$ and $M_w$ are birational to fine moduli spaces of stable sheaves, then there exists a derived equivalence \[D^b(M_v)\cong D^b(M_w).\]
\end{corollary}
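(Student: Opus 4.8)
The plan is to deduce the statement directly from Theorem~\ref{thm:k3} together with a verification that the Brauer classes occurring there vanish under each hypothesis. Since $v$ and $w$ are both primitive in $\widetilde{H}^{1,1}(S,\ZZ)$ with $v^2=w^2=2n-2$, Theorem~\ref{thm:k3} applies to each of them over the same surface $S$, producing
\[D^b(M_v,[n\theta_v])\cong D^b(S^{[n]})\cong D^b(M_w,[n\theta_w]),\]
and therefore a twisted equivalence $D^b(M_v,[n\theta_v])\cong D^b(M_w,[n\theta_w])$. The whole problem thus reduces to showing that the two twists $[n\theta_v]$ and $[n\theta_w]$ are trivial in the respective Brauer groups, for then the equivalence descends to the untwisted categories $D^b(M_v)\cong D^b(M_w)$.

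Next I would treat the two hypotheses separately. If $n=2$, then $2n-2=2$ and $\theta_v\in H^2(M_v,\mu_2)$ is $2$-torsion, so $n\theta_v=2\theta_v=0$, and likewise $n\theta_w=0$; both twists vanish outright. If instead $M_v$ and $M_w$ are birational to fine moduli spaces of stable sheaves, I would argue that $[\theta_v]=0$ as follows. By Lemma~\ref{lem:obs} the class $\theta_v$ is exactly the obstruction to $M_v$ being a fine moduli space, and on a genuinely fine moduli space this obstruction vanishes by definition. Because $\theta_v$ is built purely from the Markman-Mukai lattice with its distinguished vector---data preserved by the Hodge isometry induced by any birational correspondence of $K3^{[n]}$-type varieties---the class $\theta_v$ is carried to the (vanishing) obstruction class of the fine birational model. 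Hence $[\theta_v]=0$, so $[n\theta_v]=0$, and symmetrically $[n\theta_w]=0$. In either case the twists die and $D^b(M_v)\cong D^b(M_w)$ follows.

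The routine part is the $n=2$ case, which is immediate from the $2$-torsion observation. I expect the main obstacle to lie in the birational case: one must confirm that $\theta_v$ is a genuine birational invariant and that, under the birational identification with a fine model, it is carried to the fineness obstruction of Lemma~\ref{lem:obs}. This entails unwinding the lattice-theoretic construction of $\theta_v$ from Section~\ref{sec:lattice}, checking its invariance under the monodromy action on $H^2$ realizing the birational map, and invoking the compatibility of twists under the wall-crossing birational equivalences supplied by \cite{MSYZ24}. Once that compatibility is in place, the vanishing of the Brauer class---and hence the desired untwisted derived equivalence---follows formally.
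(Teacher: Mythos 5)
Your proposal is correct and takes essentially the same route as the paper: the paper's own (very terse) proof likewise combines Theorem \ref{thm:k3} with the vanishing of the twists, via the $2$-torsion observation $n\theta_v=2\theta_v=0$ in $H^2(M_v,\mu_2)$ when $n=2$, and via Lemma \ref{lem:obs} together with the birational invariance of the $\theta$-classes (the content of Subsection \ref{subsec:birational}, where $\delta$-classes are shown to be carried to $\delta$-classes by the isometry induced by a birational map) in the fine-birational-model case. Your outline fills in exactly these ingredients, so there is no substantive difference.
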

We remark that $M_v$ (for any generic stability condition) is birational to a fine moduli space of stable sheaves if and only if $\mathrm{div}(v)=1$ in $\tilde{H}^{1,1}(S,\ZZ)$ as proved in \cite[Corollary 4.6]{mattei2024obstruction}.
This property depends solely on the Mukai vector $v$.

\subsection*{Acknowledgements}
We are grateful to Zhiyuan Li, Hanfei Guo, Dominique Mattei, Reinder Meinsma, Alex Perry, Junliang Shen, and Qizheng Yin for helpful discussions. The author was supported by the NKRD Program of China No.~2020YFA0713200 and LNMS.

\section{Lattice-theoretical evidence}\label{sec:lattice}
Let $X$ be a hyper-K\"ahler variety of $K3^{[n]}$-type, where $n\geq 2.$
In this section, we recall the precise definition of the class $\theta_{v} \in H^2(X,\mu_{2n-2})$ and introduce the twisted extended Mukai lattice.
When $X$ and $Y$ satisfy the assumptions of Conjecture \ref{conj:main}, we establish a Hodge isometry between their corresponding twisted extended Mukai lattices, thereby providing lattice-theoretical evidence in support of the conjecture.
\subsection{Oriented  Markman-Mukai lattice}\label{subsec:oriented}
The second cohomology group $H^2(X,\ZZ)$ 
carries the Beauville-Bogomolov-Fujiki (BBF) form, along with a weight-2 integral Hodge structure. Markman has described an extension of lattice and Hodge structure, denoted by $H^2(X,\ZZ)\subset L(X)$, which we refer to as the \emph{Markman-Mukai lattice}. This lattice satisfies the following properties (see~\cite[Corollary 9.5]{Markman11}\cite[Section 1]{BHT15}):
\begin{enumerate}
    \item As a lattice, $L(X)\cong U^{\oplus 4}\oplus E_8(-1)^{\oplus 2}$, where it has signature $(4,20)$. The weight-2 Hodge structure on $L(X)$ is inherited form $H^2(X,\ZZ)$.
    \item The orthogonal complement $H^2(X,\ZZ)^\perp \subset L(X)$ is generated by a primitive vector of square $2n-2.$
    \item If $X$ is a moduli space of stable objects on a $K3$ surface $S$ with Mukai vector $v\in \widetilde{H}^{1,1}(S,\ZZ)$, then the inclusion $H^2(X,\ZZ)\subset L(X)$ is identified with $v^\perp\subset \widetilde{H}(S,\ZZ)$.
    \item An isometry $\rho: H^2(X_1,\ZZ)\to H^2(X_2,\ZZ)$ is a parallel transport operator if and only if $\rho$ is orientation-preserving and can be lifted to an isometry $\phi:L(X_1)\rightarrow L(X_2)$.
 \end{enumerate}
\begin{remark}
   Markman's construction establishes a natural choice of $O(L(X))$-orbit of embeddings of $H^2(X)$ to $L(X)$,  where the latter is considered without additional Hodge structure. Therefore, the embedding $H^2(X)\subset L(X)$ and the Hodge structure on $L(X)$ described in \cite{BHT15} are only canonical up to isometries of $L(X)$. All the above properties hold for any embedding in the natural $O(L(X))$-orbit.
\end{remark}

To define an orientation on $L(X)$, we fix a generator $v$ of $H^2(X,\ZZ)^\perp$ and proceed as follows.
Let $\sigma$ be the $2$-form on $X$, and let $h\in H^2(X)$ be a K\"ahler class. Then there is a positive four-dimensional real subspace in $L(X)$ spanned by \[\mathrm{Re}(\sigma),~\mathrm{Im}(\sigma),~h,~v.\] 
This basis determines an orientation of $L(X)$, which is independent of the choice of the K\"ahler class $h$. We denote this oriented lattice by $(L(X),v)$ and refer to it as the \emph{oriented Markman-Mukai lattice}.

Now, let $Y$ be another hyper-K\"ahler variety of $K3^{[n]}$-type. A Hodge isometry between oriented Markman-Mukai lattices \[\phi:(L(X),v)\rightarrow (L(Y),w)\] is simply  a Hodge isometry $\phi:L(X)\rightarrow L(Y).$. 
We classify  $\phi$ as orientation-preserving if it preserves the orientations determined by $v$ and $w$, and as orientation-reversing otherwise.  We remark that the identity map of lattices $\phi:(L(X),v)\rightarrow (L(X),-v)$ is orientation-reversing.



\subsection{The canonical class in  $H^2(X,\mu_{2n-2})$ }
Assume that $X$ is a manifold of $K3^{[n]}$-type. Since $X$ has no odd cohomology, the discussion in \cite[Section 4.1]{Ca02} yields the following explicit description of the (cohomological) Brauer group:
\begin{equation}\label{Br}
\mathrm{Br}(X) = \left( H^2(X, \ZZ) / \mathrm{Pic}(X)\right) \otimes \QQ/\ZZ.
\end{equation}
 The description (\ref{Br}) also allows us to present a Brauer class in the form
\begin{equation}\label{B_Field}
\left[ \frac{\beta}{d}\right] \in \mathrm{Br}(X), \quad \beta\in H^2(X,\ZZ), \quad d\in \ZZ \setminus \{0\};
\end{equation}
this is referred to as the ``$B$-field''.
Since $X$ has no odd-degree cohomology, we obtain the following short exact sequence:
\begin{equation}\label{ses:1}
    0\rightarrow  H^2(X,\ZZ)\xrightarrow{\times (2n-2)} H^2(X,\ZZ) \xrightarrow{\pi} H^2(X,\mu_{2n-2})\rightarrow 0.
\end{equation}
The natural inclusion $\mu_{2n-2}\hookrightarrow \GG_m$ induces a  map \[[-]:H^2(X,\mu_{2n-2})\rightarrow \mathrm{Br}(X).\] Here by abuse of notation, we also denote this map by $[-]$ and we have \[\left[\pi(\beta)\right]=\left[\frac{\beta}{2n-2}\right]\in \mathrm{Br}(X).\]
We introduce a class $\delta_v\in H^2(X,\ZZ)$ and show that the image $\pi(\delta_v)\in H^2(X,\mu_{2n-2})$ is a canonical class.
It leads to a class $\left[\pi(\delta_v)\right]=\left[\frac{\delta_v}{2n-2}\right]\in \mathrm{Br}(X)$.

The class $\left[\frac{\delta_v}{2n-2}\right]$ has already appeared in \cite[Lemma 7.4]{markman2020beauville} and we now reformulate it as follows:
\begin{lemma}\label{lem:delta}
       Let $v$ be a generator of $H^2(X,\ZZ)^\perp\subset L(X)$. Then there exists a class (non-unique) \[\delta_{v} \in H^2(X,\ZZ)\] satisfying the following conditions:
   \begin{enumerate}
       \item The square $\delta^2_v=2-2n$ and the divisibility $\mathrm{div}(\delta_v)=2n-2$ in $H^2(X,\ZZ)$.
       \item The class $\frac{\delta_v-v}{2n-2}$ is integral in $L(X)$.
   \end{enumerate}
   Moreover, the image $\pi(\delta_v)$ in $H^2(X,\mu_{2n-2})$ via the sequence \eqref{ses:1} is independent of the choice of $\delta_v$ and is invariant under the subgroup of the monodromy group that preserves the discriminant. 
\end{lemma}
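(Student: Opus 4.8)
The plan is to construct $\delta_v$ explicitly from an isotropic vector of $L(X)$ and then to deduce both ``moreover'' assertions from one-line congruence computations. Throughout I would use that $L(X)\cong U^{\oplus 4}\oplus E_8(-1)^{\oplus 2}$ is even and \emph{unimodular}, that $v$ is primitive with $v^2=2n-2$, and that $H^2(X,\ZZ)=v^\perp$.

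First I would produce an isotropic $w\in L(X)$ with $(v,w)=1$. Since $L(X)$ is unimodular and $v$ is primitive, there is some $w_1$ with $(v,w_1)=1$. Because $H^2(X,\ZZ)=v^\perp$ is of $K3^{[n]}$-type it contains a sublattice $U=\langle e,f\rangle$ with $e^2=f^2=0$, $(e,f)=1$; replacing $w_1$ by $w=w_1+x$ with $x=\lambda e+\mu f\in U\subset v^\perp$ leaves $(v,w)=1$ unchanged while changing $w^2$ by the arbitrary even value $2\lambda p+2\mu q+2\lambda\mu$, where $p=(w_1,e)$, $q=(w_1,f)$. The equation $w^2=0$ then amounts to the factorization $(\lambda+q)(\mu+p)=pq-\tfrac12 w_1^2$, which is always solvable (take $\lambda+q=1$). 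I set $\delta_v:=v-(2n-2)w$.

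Next I would verify (1) and (2). Since $(\delta_v,v)=v^2-(2n-2)(v,w)=0$, we have $\delta_v\in v^\perp=H^2(X,\ZZ)$; using $w^2=0$ and $(v,w)=1$ gives $\delta_v^2=v^2-2(2n-2)+(2n-2)^2w^2=-(2n-2)=2-2n$, while $\tfrac{\delta_v-v}{2n-2}=-w\in L(X)$ is exactly (2). For the divisibility, the Gram matrix of $\langle v,w\rangle$ has determinant $-1$, so $\langle v,w\rangle\cong U$ is a unimodular, hence orthogonal, direct summand $L(X)=\langle v,w\rangle\oplus K$. Intersecting with $v^\perp$ yields the orthogonal splitting $v^\perp=\ZZ\delta_v\oplus K$, and since $\delta_v\perp K$ and $(\delta_v,a\delta_v)=-(2n-2)a$, the functional $(\delta_v,-)$ has image $(2n-2)\ZZ$, i.e.\ $\mathrm{div}(\delta_v)=2n-2$.

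For independence, suppose $\delta_v,\delta_v'$ both satisfy (2). Then $\delta_v\equiv v\equiv\delta_v'$ modulo $(2n-2)L(X)$, so $\delta_v-\delta_v'\in(2n-2)L(X)\cap v^\perp$; the key elementary point is $(2n-2)L(X)\cap v^\perp=(2n-2)v^\perp$, since $(2n-2)y\in v^\perp$ forces $(y,v)=0$. Hence $\delta_v-\delta_v'\in(2n-2)H^2(X,\ZZ)$ and $\pi(\delta_v)=\pi(\delta_v')$ by \eqref{ses:1}. For monodromy invariance, let $g$ lie in the subgroup acting trivially on the discriminant group of $H^2(X,\ZZ)$. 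Because $\mathrm{div}(\delta_v)=2n-2$, the class $\tfrac{\delta_v}{2n-2}$ pairs integrally with all of $H^2(X,\ZZ)$ and so defines a discriminant class, which $g$ fixes; thus $g\delta_v-\delta_v\in(2n-2)H^2(X,\ZZ)$ and $\pi(g\delta_v)=\pi(\delta_v)$. The only genuinely delicate step is the first one: securing the exact square $2-2n$ and the exact divisibility $2n-2$ simultaneously. Both are forced by the single choice of an isotropic $w$ with $(v,w)=1$, which splits off a copy of $U$; once that is in hand, the remaining claims reduce to the congruence computations above.
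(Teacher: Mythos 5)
Your proof is correct, and it takes a genuinely different route from the paper's in both halves of the lemma. For existence, the paper parallel-transports to a Hilbert scheme $S^{[n]}$, identifies $L(X)$ with the Mukai lattice $\widetilde{H}(S,\ZZ)$, and pulls back the class $(1,0,n-1)$ along that identification; you instead work intrinsically in $L(X)$, using unimodularity to produce an isotropic $w$ with $\left<v.w\right>=1$ and setting $\delta_v:=v-(2n-2)w$, with the divisibility $\mathrm{div}(\delta_v)=2n-2$ coming from the orthogonal splitting $L(X)=\mathrm{Span}(v,w)\oplus K$. These are secretly the same formula --- in the paper's model one has $(1,0,n-1)=v-(2n-2)w$ with $w=(0,0,-1)$ --- but your version avoids both the parallel transport and the Hilbert-scheme model, and it makes the divisibility verification explicit, which the paper leaves implicit in the model computation. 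For monodromy invariance the divergence is more substantial: the paper cites the fact that for $n\geq 3$ a discriminant-preserving monodromy operator lifts to an isometry of $L(X)$ fixing $v$ and then invokes uniqueness, treating $n=2$ by a separate parity observation; you argue directly that trivial action on the discriminant group fixes the class of $\frac{\delta_v}{2n-2}\in H^2(X,\ZZ)^\vee$, so $g\delta_v\equiv\delta_v$ modulo $(2n-2)H^2(X,\ZZ)$, uniformly in $n$, with no lifting theorem and no case analysis; this is cleaner and in fact proves invariance under any isometry acting trivially on the discriminant, not only monodromy operators. The uniqueness step is essentially identical in both proofs (condition (2) forces $\delta_v-\delta_v'\in(2n-2)L(X)\cap v^\perp=(2n-2)H^2(X,\ZZ)$). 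The trade-off: the paper's route is shorter given the machinery it already runs on and keeps $\delta_v$ tied to the explicit Mukai-vector picture reused later (e.g.\ in the proof of Theorem \ref{thm:k3}), while yours is self-contained and purely lattice-theoretic. Note that both arguments quietly use the identification $H^2(X,\ZZ)=v^\perp$ in $L(X)$, i.e.\ primitivity of Markman's embedding; the paper relies on this elsewhere as well, so it is not a gap.
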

\begin{proof}
     For conditions  (1) and (2), we apply a parallel transport operator $\rho$ to a Hilbert scheme of $n$ points on a $K3$ surface $S$, yielding the following commutative diagram:
    \begin{equation*}
        \begin{tikzcd}
{H^2(X,\ZZ)} \arrow[r, "\rho"] \arrow[d, hook'] & {H^2(S^{[n]},\ZZ)} \arrow[d, hook'] \\
L(X) \arrow[r, "\phi"]                            & \widetilde{H}(S,\ZZ)                                 
\end{tikzcd}.
\end{equation*}
We then define $\delta_v$ as follows:
\begin{itemize}
    \item If $\phi(v)=(1,0,1-n)$, then take $\delta_v=\phi^{-1}((1,0,n-1))$.
    \item If $\phi(v)=(-1,0,n-1)$, then take $\delta_v=\phi^{-1}((-1,0,1-n))$.
\end{itemize}
For the final statement, the uniqueness follows directly from (2). Any monodromy operator sends $\pi(\delta_v)$ to $\pm \pi(\delta_v)$.
 When $n=2$, we have \[\pi(\delta_v)=-\pi(\delta_v)\in H^2(X,\mu_{2})\] and there is nothing to prove.
 When $n\geq 3$, any monodromy operator preserving the discriminant is necessarily lifted to an isometry $\phi:L(X)\to L(X)$, which satisfies $\phi(v)=v$. Therefore, the invariance is implied by the uniqueness.
\end{proof}

Now we can give the following definition:
\begin{definition}
We refer to any class satisfying Lemma \ref{lem:delta}(1)(2) as a $\delta$-class. 
We then define \[\theta_v:=\pi(\delta_v)\in H^2(X,\mu_{2n-2})\] 
for any $\delta$-class.
\end{definition}
We have $[\theta_v]=\left[\frac{\delta_v}{2n-2}\right]\in\mathrm{Br}(X)$ described as above.
Although neither $\delta_v$ nor $[\theta_v]$ is canonical, the class $\theta_v \in H^2(X,\mu_{2n-2})$ is canonical up to a choice of the orientation of $L(X)$. We remark that \[\theta_v=-\theta_{-v},\] follows immediately from the definition.

The result reads that \emph{any} hyper-K\"ahler variety of $K3^{[n]}$-type carries a natural $\mu_{2n-2}$-gerbe structure. 
While this $\mu_{2n-2}$-gerbe structure is always nontrivial, the corresponding $\GG_m$-gerbe may be trivial.
Markman has observed the following:
\begin{lemma}[\cite{markman2020beauville},\cite{mattei2024obstruction}]\label{lem:obs}
    Suppose $X$ is a moduli of stable sheaves on a $K3$ surface $S$.
    Then $X$ is fine if and only if $[\theta_v]$ is a trivial Brauer class. 
\end{lemma}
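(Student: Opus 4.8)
The plan is to reduce both the fineness of $X=M_v$ and the vanishing of $[\theta_v]$ to the single numerical condition that $v$ have divisibility one in the algebraic Mukai lattice $\widetilde{H}^{1,1}(S,\ZZ)$. On the geometric side this is classical: by Mukai's construction a universal family exists on $S\times M_v$ as soon as there is a class $\zeta\in\widetilde{H}^{1,1}(S,\ZZ)$ with $\langle v,\zeta\rangle=1$, and conversely the Mukai morphism forces $\divi(v)=1$ whenever $M_v$ is fine; thus $X$ is fine if and only if $\divi(v)=1$ (see \cite{mattei2024obstruction}, \cite{markman2020beauville}). It therefore remains to prove the purely lattice-theoretic equivalence $[\theta_v]=0\iff\divi(v)=1$, using the identifications $H^2(X,\ZZ)=v^\perp$ and $\NS(X)=v^\perp\cap\widetilde{H}^{1,1}(S,\ZZ)$ inside $L(X)=\widetilde{H}(S,\ZZ)$.

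First I would unwind the Brauer group description \eqref{Br}: under $\Br(X)=\bigl(v^\perp/(v^\perp\cap\widetilde{H}^{1,1}(S,\ZZ))\bigr)\otimes\QQ/\ZZ$, the class $[\theta_v]=[\delta_v/(2n-2)]$ vanishes precisely when
\[\delta_v\in \bigl(v^\perp\cap\widetilde{H}^{1,1}(S,\ZZ)\bigr)+(2n-2)\,v^\perp.\]
Next I would bring in the defining property of a $\delta$-class from Lemma \ref{lem:delta}(2): write $\delta_v=v+(2n-2)\xi$ with $\xi\in\widetilde{H}(S,\ZZ)$, where pairing against $v$ and using $\delta_v\in v^\perp$ forces $\langle\xi,v\rangle=-1$. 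Substituting this expression, a short computation shows that the displayed membership is equivalent to the existence of an algebraic class $\eta\in v^\perp\cap\widetilde{H}^{1,1}(S,\ZZ)$ with $v\equiv\eta\pmod{(2n-2)\widetilde{H}(S,\ZZ)}$; the point is that the fractional contribution $v/(2n-2)$ can be cancelled against $\xi$ exactly when such an $\eta$ is available, and the orthogonality $\langle\eta,v\rangle=0$ makes the pairing with $v$ vanish automatically.

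Finally I would verify that this last condition is equivalent to $\divi(v)=1$. Given such an $\eta$, writing $v-\eta=(2n-2)\zeta$ and pairing with $v$ yields $\langle v,\zeta\rangle=1$, while primitivity of $\widetilde{H}^{1,1}(S,\ZZ)$ in $\widetilde{H}(S,\ZZ)$ forces $\zeta$ to be algebraic, so $\divi(v)=1$; conversely, given $\zeta\in\widetilde{H}^{1,1}(S,\ZZ)$ with $\langle v,\zeta\rangle=1$ one takes $\eta=v-(2n-2)\zeta$, which is algebraic, orthogonal to $v$, and congruent to $v$ modulo $2n-2$. Combining the two equivalences yields the lemma.

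The only real subtlety—the main obstacle—lies in the middle step: one must carefully track the passage between the full lattice $\widetilde{H}(S,\ZZ)$, where $\xi$ lives and where the $\delta$-class is pinned down, and the algebraic sublattice $\widetilde{H}^{1,1}(S,\ZZ)$, which controls both $\NS(X)$ and the divisibility of $v$. In particular one must confirm that the representative $\delta_v$ may be replaced by any other member of its $\delta$-class without affecting the argument, which is legitimate since $[\theta_v]$ is independent of that choice by Lemma \ref{lem:delta}.
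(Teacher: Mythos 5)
Your proposal is correct, and it follows the same overall route as the paper: both proofs pivot on the single numerical condition $\divi(v)=1$ in $\widetilde{H}^{1,1}(S,\ZZ)$, and both invoke \cite[Corollary 4.6]{mattei2024obstruction} for the geometric half (fineness $\iff$ $\divi(v)=1$). The difference is in the other half: the paper simply cites Markman's result (\cite[Lemma 7.5(2)]{markman2020beauville}) for the equivalence $[\theta_v]=0\iff\divi(v)=1$, whereas you prove it directly from the $\delta$-class axioms of Lemma \ref{lem:delta}. Your computation checks out: writing $\delta_v=v+(2n-2)\xi$ with $\langle\xi,v\rangle=-1$, the membership $\delta_v\in\NS(X)+(2n-2)H^2(X,\ZZ)$ translates exactly into the existence of an algebraic $\eta\in v^\perp$ with $v\equiv\eta\pmod{(2n-2)\widetilde{H}(S,\ZZ)}$ (the orthogonality bookkeeping $\langle\zeta,v\rangle=1$, $\langle\xi+\zeta,v\rangle=0$ works as you claim), and saturatedness of $\widetilde{H}^{1,1}(S,\ZZ)$ in $\widetilde{H}(S,\ZZ)$ correctly converts this into $\divi(v)=1$. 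So your write-up is slightly longer but self-contained on the lattice side, in effect reproving Markman's lemma; the paper's version is shorter but outsources that step. One point worth making explicit if you polish this: the identification $\NS(X)=v^\perp\cap\widetilde{H}^{1,1}(S,\ZZ)$ (the Mukai homomorphism restricted to the algebraic part) is the input that lets you pass between $\Pic(X)$ in the Brauer group description \eqref{Br} and the algebraic Mukai lattice; you use it, and it deserves a citation.
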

\begin{proof}
Markman proved in \cite[Lemma 7.5(2)]{markman2020beauville} that $[\theta_v]$ is trivial if and only if
\[\mathrm{div}(v)=1 {\rm ~in~}\tilde{H}^{1,1}(X,\ZZ),\]
which is equivalent to $X$ being a fine moduli space, according to
\cite[Corollary 4.6]{mattei2024obstruction}.
\end{proof}

\subsection{Twisted extended Mukai lattices}
Let $B\in H^2(X,\QQ)$ be a $B$-field. We define the twisted extended Mukai lattice $\widetilde{H}(X,B,\ZZ)$ as the $K3$ case, which is an oriented lattice with a weight-2 Hodge structure. This lattice is expected to control the derived category of certain twisted sheaves.
To be precise, as a lattice, we have \[\widetilde{H}(X,B,\ZZ)\cong H^2(X,\ZZ)\oplus U,\] where $U$ is the hyperbolic plane with generators $e,~f$, satisfying $\left<e.f\right>=-1,~e^2=f^2=0$. In this way, the BBF form on $H^2(X,\ZZ)$ extends to the Mukai pairing $\left<-.-\right>$.
The weight-2 Hodge structure on $\widetilde{H}(X,B,\ZZ)$ is given by \[\widetilde{H}^{2,0}(X,B)=\CC \cdot {\bfe}^B(\sigma)=\CC\cdot(\sigma+\left<B.\sigma\right>f),\] where $\sigma$ is the $2$-form on $X$ and 
\[\begin{aligned}
    {\bfe}^B:\widetilde{H}(X,\QQ)&\longrightarrow \widetilde{H}(X,\QQ)\\
    re+c+sf &\mapsto re+(c+rB)+(rB^2/2+s+\left<c.B\right>)f.
\end{aligned}
\] for any $c\in H^2(X,\QQ)$ and $r,~s\in \QQ$.
The orientation on $\widetilde{H}(X,B,\ZZ)$ is induced by the basis
\[\mathrm{Re}(\sigma),~\mathrm{Im}(\sigma),~\mathrm{Re}({\bfe}^{ih}(e)),~\mathrm{Im}({\bfe}^{ih}(e))\] 
of a four-dimensional positive space, where $h$ is a K\"ahler class.
When $B$ is integral, ${\bfe}^B$ is an integral isometry preserving both the discriminant and the orientation. In fact, for any even lattice $\Lambda$, one can define ${\bfe}^B\in O(\Lambda\oplus U)$ using the same formula.
There are other integral transformations that are useful, known as Eichler transvections. \footnote{Note that some of the signs differ from \cite{GHS09}, as $\langle e. f \rangle = -1$ in this case, while it is 1 in \cite{GHS09}.}
\begin{definition}
    Let $\Lambda$ be any even lattice.
    For an integral class $b\in \Lambda$, the \emph{Eichler transvection} $E_b$ is an integral isometry of $\Lambda\oplus U$ such that for any $c\in \Lambda$ and $r,~s\in\ZZ$, we have
    \[E_b(re+c+sf)=(r-\left<b.c\right>+\frac{sb^2}{2})e+(c-sb)+sf.\]
\end{definition}
\begin{lemma}
    The Eichler transvection $E_b$ preserves the discriminant and the orientation of $\Lambda \oplus U$. 
\end{lemma}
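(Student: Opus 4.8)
The plan is to separate the two assertions, both of which become short once the action of $E_b$ is written out. Before either, I would note that the isometry identity for $E_b$ is polynomial in $b$: expanding $\langle E_b(x).E_b(y)\rangle$ for $x=re+c+sf$ and $y=r'e+c'+s'f$ using $e^2=f^2=0$, $\langle e.f\rangle=-1$, and the symmetry of the form, every term involving $b$ cancels and one recovers $\langle x.y\rangle$. The point I want to extract for later is that, this identity being polynomial in $b$, the same formula defines an isometry of $(\Lambda\oplus U)\otimes\RR$ for \emph{every} real vector $b$, not only for integral ones.

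For the discriminant, I would use that $U$ is unimodular, so $(\Lambda\oplus U)^\ast=\Lambda^\ast\oplus U$ and the discriminant group is canonically $\Lambda^\ast/\Lambda$, a class being represented by some $c\in\Lambda^\ast$ placed in the $\Lambda$-summand (i.e.\ with $r=s=0$). Evaluating the defining formula on such a $c$ gives $E_b(c)=c-\langle b.c\rangle e$. Since $b\in\Lambda$ and $c\in\Lambda^\ast$, the coefficient $\langle b.c\rangle$ is an integer, so $E_b(c)-c\in\ZZ e\subset\Lambda\oplus U$. Hence $E_b$ fixes every element of $\Lambda^\ast/\Lambda$, which is exactly the statement that it preserves the discriminant.

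For the orientation, I would pass to real coefficients and use the one-parameter family $\{E_{tb}\}_{t\in[0,1]}$ obtained by replacing $b$ with $tb$ in the formula. By the remark above each $E_{tb}$ is an isometry of $(\Lambda\oplus U)_\RR$, the map $t\mapsto E_{tb}$ is continuous, $E_0=\id$, and $E_b$ is recovered at $t=1$; thus $E_b$ is joined to the identity by a path inside $O\big((\Lambda\oplus U)_\RR\big)$. The orientation of a maximal positive-definite subspace defines a locally constant function $\epsilon_+\colon O\big((\Lambda\oplus U)_\RR\big)\to\{\pm1\}$ (for a reference positive subspace $W$ one sets $\epsilon_+(g)=\mathrm{sign}\,\det(\pi_W\circ g|_W)$, which is well defined because $g$ cannot send a nonzero positive vector into the negative part). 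Since $\epsilon_+(\id)=+1$ and $\epsilon_+$ is constant along the path, $\epsilon_+(E_b)=+1$, i.e.\ $E_b$ preserves the orientation.

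The computations are all routine; the only point requiring genuine care is the orientation half, where one must fix the precise meaning of ``orientation'' used in the lemma (orientation of the positive-definite part) and check that the associated sign $\epsilon_+$ is honestly locally constant on the real orthogonal group. Once that is in place, the algebraic family $E_{tb}$ connecting $E_b$ to the identity settles the claim at once, and the discriminant statement is purely formal given the unimodularity of $U$.
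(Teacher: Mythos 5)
Your proof is correct, but it takes a genuinely different route from the paper's. The paper disposes of the lemma in one line: letting $\eta$ be the involution that swaps $e$ and $f$ and fixes $\Lambda$, it checks the identity $E_b=\eta\circ{\bfe}^{-b}\circ\eta$, and since $\eta$ occurs twice, both properties follow from the corresponding (previously asserted) properties of the integral exponential ${\bfe}^{-b}$. You instead verify both claims from scratch: for the discriminant you use unimodularity of $U$ to identify the discriminant group of $\Lambda\oplus U$ with $\Lambda^\ast/\Lambda$, with representatives $c\in\Lambda^\ast$ placed in the $\Lambda$-summand, and the computation $E_b(c)=c-\langle b.c\rangle e\equiv c$ (valid because $\langle b.c\rangle\in\ZZ$ for $b\in\Lambda$, $c\in\Lambda^\ast$) shows the induced action on the discriminant group is trivial; for the orientation you join $E_b$ to the identity through the real isometries $E_{tb}$, $t\in[0,1]$, and invoke local constancy of the sign $\epsilon_+$ on the real orthogonal group. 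Both steps are sound: the isometry identity is indeed polynomial in $b$, so $E_{tb}\in O\bigl((\Lambda\oplus U)_\RR\bigr)$ for every real $t$, and $\epsilon_+$ is the standard character cutting out two of the four connected components of $O(p,q)$. What your argument buys is self-containedness: the paper's reduction leans on the unproved assertion that integral ${\bfe}^B$ preserves discriminant and orientation, whereas your two arguments apply verbatim to ${\bfe}^B$ itself (via the family ${\bfe}^{tB}$), so in effect you prove that assertion as well. What the paper's conjugation trick buys is brevity and an explicit structural relation between $E_b$ and ${\bfe}^{-b}$, which is the fact it actually wants on record.
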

\begin{proof}
Let $\eta$ be the isometry exchanging $e,~f$ and acting as $\id$ on $\Lambda$.
Then, by a straightforward computation, we find \[E_b=\eta\circ {\bfe}^{-b}\circ \eta\] and the assertion follows.
\end{proof}
In this paper, the lattice $\Lambda$ is taken to be either $L(X)$ or $H^2(X,\ZZ)$.
\subsection{Proof of Theorem \ref{thm:lattice}}\label{subsec:proof-lattice}
Let $X,~Y$ be hyper-K\"ahler varieties of $K3^{[n]}$-type. Assume that there exists a Hodge isometry between the oriented Markman-Mukai lattices
\[\phi:(L(X),v)\rightarrow (L(Y),w).\] Since $v^2=w^2$, we have \begin{equation}\label{eq:2}
    \left<\phi(v).(\phi(v)-w)\right>=\frac{1}{2}(\phi(v)-w)^2.
\end{equation}
Now, consider the isometry \[E_{\tilde{\phi}(v)-w}\circ \tilde{\phi}:L(X)\oplus U\to L(Y)\oplus U,\] where $\tilde{\phi}:=\phi\oplus \id_U.$ 
A straightforward computation using the definition and equation \eqref{eq:2} shows that
\begin{equation}\label{eq:3}
    E_{\tilde{\phi}(v)-w}\circ \tilde{\phi}(v+f)=w+f.
\end{equation}
Take $\delta$-classes $\delta_v\in H^2(X,\ZZ)\subset L(X)$ and $\delta_w\in H^2(Y,\ZZ)\subset L(Y)$ as in Lemma \ref{lem:delta}. Then ${\bfe}^{\frac{\delta_v-v}{2n-2}}$ (resp. ${\bfe}^{\frac{\delta_v-v}{2n-2}}$)  is an integral isometry of $L(X)\oplus U$ (resp. $L(Y)\oplus U$) by Lemma \ref{lem:delta}(2). 
Moreover, we have \begin{equation}\label{eq:4}
    {\bfe}^{\frac{v-\delta_v}{2n-2}}(v)=v+f,~{\bfe}^{\frac{\delta_w-w}{2n-2}}(w+f)=w
\end{equation}
since $\left<\delta_v.v\right>=\left<\delta_w.w\right>=0$.
Combining \eqref{eq:3} and \eqref{eq:4}, we obtain an integral isometry 
\begin{equation}\label{eq:5}
    \tilde{\psi}:{\bfe}^{\frac{\delta_w-w}{2n-2}} \circ E_{\tilde{\phi}(v)-w}\circ \tilde{\phi}\circ  {\bfe}^{\frac{v-\delta_v}{2n-2}} :L(X)\oplus U\to L(Y)\oplus U
\end{equation}
satisfying
\begin{equation}\label{eq:6}
    \tilde{\psi}(v)=w.
\end{equation}
Since $ E_{\tilde{\phi}(v)-w}\circ \tilde{\phi}$ respects the untwisted Hodge structure, we can compute that
\begin{equation}\label{eq:7}
    \tilde{\psi}(\sigma_X+\left<\frac{\delta_v}{2n-2}.\sigma_X\right>f)=\sigma_Y+\left<\frac{\delta_w}{2n-2}.\sigma_Y\right>f.
\end{equation}
Combining \eqref{eq:6} and \eqref{eq:7}, we obtain the desired Hodge isometry
\begin{equation}\label{eq:8}
   \tilde{\psi}|_{v^\perp}: \widetilde{H}(X,\frac{\delta_v}{2n-2},\ZZ)\cong \widetilde{H}(Y, \frac{\delta_w}{2n-2},\ZZ)
\end{equation}
by restricting to $v^\perp.$ 
The Hodge structures of both sides of \eqref{eq:8} are independent of the choice of $\delta_v~,\delta_w$, according to Lemma \ref{lem:delta}.
The statement on orientation follows from the fact that both exponential operators and Eichler transvections are orientation-preserving.

Finally, we explain why \eqref{eq:8} provides evidence for Conjecture \ref{conj:main}. Beckman \cite[Definition 5.2 and Theorem 8.1]{Bec23} and Markman \cite[Theorem 12.2]{m24obs} have shown that any (untwisted) derived equivalence preserves the lattice $H^2(X,\ZZ)\oplus U$ with a \emph{twisted} Hodge structure, where the $B$-field is exactly $\frac{\delta_v}{2}$. Therefore, if a derived Torelli theorem exists for hyper-Kähler varieties of $K3^{[n]}$-type, then the Brauer class of the derived category and the $B$-field of the corresponding twisted extended Mukai lattice must differ by $\frac{\delta_v}{2}$.

\section{Equivalences via hyperholomorphic bundles}
Assume that $n\geq 2.$
In this section, we establish Theorem \ref{thm:primitive-embedding}. Building upon Markman's projectively hyperholomorphic bundles, we further deduce in Theorem \ref{thm:criterion}  a derived Torelli-type criterion characterizing derived equivalences between projective hyperk\"ahler varieties.
\subsection{Birational $K3^{[n]}$-type varieties}\label{subsec:birational}
As a first example of Conjecture \ref{conj:main}, we consider birational pairs, which serve as a key ingredient in the proof of Theorem \ref{thm:primitive-embedding}.

Let $f:X \dashrightarrow X'$ be a birational transformation between hyper-K\"ahler varieties of ${K3}^{[n]}$-type. The induced map $f_*$ is a parallel transport operator, that extends to the following commutative diagram, where $\tilde{f}_*$ is a Hodge isometry:
\[\begin{tikzcd}
{H^2(X,\ZZ)} \arrow[r, "f_*"] \arrow[d, hook'] & {H^2(X',\ZZ)} \arrow[d, hook'] \\
L(X) \arrow[r, "\tilde{f}_*"]                    & L(X')                           
\end{tikzcd}.\]
Let $v\in H^2(X,\ZZ)^\perp \subset L(X)$ be a generator and let $v':=\tilde{f}_*(v)$.  
Note that $\tilde{f}_*:(L(X),v)\rightarrow (L(X'),v')$ is an orientation-preserving isometry.

It is clear that if $\delta_v$ satisfies Lemma \ref{lem:delta}(1)(2) with respect to $v$, then $f_*(\delta_v)$ satisfies the same conditions with respect to $v'$. 
Thus, the main theorem of \cite{MSYZ24} implies that there exists a derived equivalence
\begin{equation}\label{eq:de}
    D^b(X,[k\theta_v])\cong D^b(X',[k\theta_{v'}]).
\end{equation} for all $k\in \ZZ$.
Let's examine how $\epsilon=\pm 1$ works.
By changing the generator, we obtain an orientation-reversing isometry $\tilde{f}_*:(L(X),v)\rightarrow (L(X'),-v')$. Since $\theta_{v'}=-\theta_{-v'},$ we can rewrite the above equivalence as \eqref{eq:de} as
\[D^b(X,[k\theta_v])\cong D^b(X',[-k\theta_{-v'}]),\] which is consistent with Conjecture \ref{conj:main} when $k=n$.
\subsection{Hodge parallel transport}\label{subsec:gen}
We provide a slight generalization of the results in \cite{MSYZ24}.
Let $X,~X'$ be hyper-K\"ahler varieties of ${K3}^{[n]}$-type and let
\[\rho:H^2(X,\ZZ)\to H^2(X',\ZZ)\] be a parallel transport operator preserving the Hodge structures. This implies that $X$ and $X'$ are birational by \cite[Theorem 1.3]{Markman11}. However, $\rho$ is not necessarily induced by a birational map.
Nevertheless, Markman showed in \cite[Theorem 6.18]{Markman11} that $\rho$ can be written as a composition of prime exceptional reflections and isometries induced by birational maps.

A prime exceptional reflection $R_e:H^2(X,\ZZ)\to H^2(X,\ZZ)$
is the reflection in a certain algebraic class $e$. In particular, it acts trivially on $\mathrm{Br}(X)$.
Combining this with the main result of \cite{MSYZ24}, we obtain the following theorem:
\begin{theorem}\cite{MSYZ24}\label{thm:genD}
    Let $\rho:H^2(X,\ZZ)\to H^2(X',\ZZ)$ be any parallel transport operator that preserves Hodge structures. Then for any $B\in H^2(X,\QQ)$, there is an equivalence of twisted derived categories \[D^b(X,[B])\cong D^b(X',\left[\rho(B)\right])\]
\end{theorem}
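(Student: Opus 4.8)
The plan is to reduce the statement to the two elementary situations that the hypotheses make available: isometries induced by birational maps, for which we may invoke the main theorem of \cite{MSYZ24}, and prime exceptional reflections, which act trivially on the Brauer group. The bridge between these and an arbitrary Hodge-preserving parallel transport operator $\rho$ is Markman's decomposition \cite[Theorem 6.18]{Markman11}, which writes $\rho$ as a finite composition
\[
\rho = \rho_N \circ \cdots \circ \rho_1, \qquad \rho_i : H^2(X_{i-1},\ZZ) \to H^2(X_i,\ZZ),
\]
with $X_0 = X$, $X_N = X'$ intermediate hyper-K\"ahler varieties of $K3^{[n]}$-type, and each $\rho_i$ either induced by a birational map $f_i : X_{i-1} \dashrightarrow X_i$ or a prime exceptional reflection (in which case $X_{i-1} = X_i$). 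First I would set $B_0 = B$ and $B_i = \rho_i(B_{i-1}) \in H^2(X_i, \QQ)$, so that $B_N = \rho(B)$; since a composition of derived equivalences is again one, it then suffices to produce, for each $i$, an equivalence $D^b(X_{i-1}, [B_{i-1}]) \cong D^b(X_i, [B_i])$.

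For a birational factor $f_i : X_{i-1} \dashrightarrow X_i$ I would apply the main theorem of \cite{MSYZ24} directly: for any $B$-field it yields a derived equivalence $D^b(X_{i-1}, [B_{i-1}]) \cong D^b(X_i, [(f_i)_*(B_{i-1})])$, and $(f_i)_* = \rho_i$ on $H^2$, so the target Brauer class is exactly $[B_i]$. This is the step doing the real work, and it is available off the shelf; the only point to verify is that \cite{MSYZ24} is stated for an arbitrary rational $B$-field rather than only for the integral multiples $k\theta_v$ recorded in Subsection \ref{subsec:birational}.

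The reflection factors cost nothing. If $\rho_i = R_e$ is a prime exceptional reflection on $X_{i-1} = X_i$, then $e$ is the class of an effective divisor and hence lies in $\Pic(X_{i-1})$. Writing $R_e(x) = x - \tfrac{2\langle x, e\rangle}{\langle e, e\rangle}\, e$, the difference $B_i - B_{i-1}$ is a rational multiple of $e$ and therefore lies in $\Pic(X_{i-1}) \otimes \QQ$. Under the description $\Br(X_{i-1}) = (H^2(X_{i-1},\ZZ)/\Pic(X_{i-1})) \otimes \QQ/\ZZ$ of \eqref{Br}, this difference maps to $0$, so $[B_i] = [B_{i-1}]$ and the two twisted categories are literally equal, the required equivalence being the identity. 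This is precisely where the Hodge-preserving hypothesis on $\rho$ enters: it guarantees that the reflections appearing in Markman's decomposition are reflections in algebraic classes, so that $e \in \Pic$.

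Composing the equivalences over all $i$ yields $D^b(X, [B]) \cong D^b(X', [\rho(B)])$. The main obstacle I anticipate is bookkeeping rather than conceptual: one must match the intermediate $B$-fields $B_i$ with the birational transports $(f_i)_* = \rho_i$ consistently at every stage so that the Brauer classes compose correctly, and one must confirm that the cited form of \cite{MSYZ24} covers arbitrary rational $B$-fields. Once those are in place the argument is a formal concatenation.
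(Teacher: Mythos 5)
Your proposal is correct and follows essentially the same route as the paper: decompose $\rho$ via \cite[Theorem 6.18]{Markman11} into isometries induced by birational maps (handled by the main theorem of \cite{MSYZ24}) and prime exceptional reflections (which fix Brauer classes since they are reflections in algebraic classes), then compose. The paper's own proof is just a terser version of exactly this argument, so there is nothing to add beyond your more explicit bookkeeping of the intermediate $B$-fields.
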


\subsection{Equivalences from hyperholomorphic bundles}
We denote by $\mathfrak{M}_\Lambda$ the moduli space of marked manifolds $(X,\eta_X)$ of~$K3^{[n]}$-type.
Let $S$ be a $K3$ surface with $\mathrm{Pic}(S)=\ZZ H$. 
Consider a primitive and isotropic Mukai vector
\[
v_0:=(r, mH, s) \in \tilde{H}(S, \ZZ),\quad  v_0^2=0
\]
with
\[
m\in \ZZ, \quad \mathrm{gcd}(r,s)=1  
\]
satisfying that 
\begin{equation}\label{VB}
    \frac{r}{\ell} \nmid \frac{1}{2}\left( \frac{mH}{\ell}\right)^2 +1, \quad \ell := \mathrm{gcd}(r,m).
\end{equation}
Clearly (\ref{VB}) guarantees that $r\geq 2$. By \cite[Lemma 1.2]{Yoshi09}, the numerical condition (\ref{VB}) implies that the moduli space $M$ consists of stable vector bundles.

Assume that $r,s$ are two coprime integers with $r\geq 2$. Assume further that the Mukai vector
\[
v_0:=(r, mH, s) \in \tilde{H}(S, \ZZ)
\]
is isotropic, \emph{i.e.}~$v_0^2=0$, and that all stable sheaves on $S$ with Mukai vector $v_0$ are stable vector bundles.
Let $M$ be the moduli space of stable vector bundles on $S$ with Mukai vector $v_0$. Then $M$ is also a $K3$ surface, and the coprime condition on $r, s$ ensures the existence of a universal rank $r$ bundle $\cU$ on $M \times S$.
There is a Hodge isometry 
\[
F_\cU: H^2(M, \QQ) \to H^2(S, \QQ),
\]
induced by $\cU$; see \cite[Corollary 7.3]{M24}.
Conjugating the BKR correspondence, we obtain a vector bundle $\cU^{[n]}$ on $M^{[n]} \times S^{[n]}$; see \cite[Lemma 7.1]{M24}. This vector bundle induces a derived equivalence
\begin{equation}\label{Deq}
\Phi_{\cU^{[n]}}: D^b(M^{[n]}) \xrightarrow{\simeq} D^b(S^{[n]}).
\end{equation}
Under the natural identification
\begin{equation}\label{Hilb}
H^2(M^{[n]}, \QQ) = H^2(M, \QQ)\oplus \QQ \delta_{M^{[n]}}, \quad H^2(S^{[n]}, \QQ) = H^2(S, \QQ)\oplus \QQ  \delta_{S^{[n]}},
\end{equation}
Markman further showed in \cite[Section 5.6]{M24} that the characteristic class of $\cU^{[n]}$ induces a rational Hodge isometry
\footnote{This rational isometry is also described later in another way in Example \ref{ex:f} of Subsection \ref{subsec:Q-HI}.}
\begin{equation}\label{eq:hil}
    F_{\cU^{[n]}}=F_\cU\oplus \mathrm{id_\delta}: H^2(M^{[n]}, \QQ) \to H^2(S^{[n]}, \QQ). 
\end{equation}
For $g \in O(\Lambda_\QQ)$, we define $\mathfrak{M}_{g}$ to be the moduli space of isomorphism classes of quadruples $(X, \eta_X, Y, \eta_Y)$ where $(X, \eta_X), (Y, \eta_Y) \in \mathfrak{M}_{\Lambda}$ are the corresponding markings and 
\[
\eta_Y^{-1}\circ g \circ \eta_X: H^2(X, \QQ) \to H^2(Y,\QQ)
\]
is a Hodge isometry sending some K\"ahler class of $X$ to a K\"ahler class of $Y$. 

We summarize the key result as follows (for a partial summary, see \cite[Theorem 1.3]{MSYZ24}), which combines the deformation of Markman's projectively hyperholomorphic bundles \cite{M24}, the twisted D-equivalence theorem \cite{MSYZ24} and \cite[Theorem 2.3]{KK23}:
\begin{theorem}[\cite{M24, KK23,MSYZ24}]\label{thm:summary}
   Let $X,~Y$ be hyper-K\"ahler varieties of $K3^{[n]}$-type and $S,~M,~r\geq 2,~\cU,~F_{\cU^{[n]}}$ be as above.
   Assume that there exist parallel transports \[\rho_1:H^2(M^{[n]},\ZZ)\to H^2(X, \ZZ),~\rho_2:H^2(S^{[n]},\ZZ)\to H^2(Y, \ZZ)\] such that the composition \[\rho_2\circ F_{\cU^{[n]}} \circ \rho_1^{-1}:H^2(X,\QQ)\to H^2(Y,\QQ)\] is a rational Hodge isometry. Then we have an
 equivalence of twisted derived categories
 \[D^b(X,\left[\rho_1(\frac{-c_1(\cU|_{M\times \{x\}})}{r}-\frac{\delta_{M^{[n]}}}{2})\right])\cong D^b(Y,\left[\rho_2(\frac{c_1(\cU|_{\{y\}\times S})}{r}-\frac{\delta_{S^{[n]}}}{2})\right]),\]
where $x\in S,~y\in M$ are arbitrary points.
\end{theorem}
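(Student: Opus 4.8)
The plan is to produce the equivalence as a Fourier--Mukai transform whose kernel is obtained by deforming Markman's bundle $\cU^{[n]}$ away from the special pair $(M^{[n]}, S^{[n]})$, and then to invoke the twisted $D$-equivalence theorem of \cite{MSYZ24} to conclude that the deformed kernel induces an equivalence. The hypothesis on $(\rho_1, \rho_2)$ is precisely the input that makes this deformation possible.

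First I would view $M^{[n]} \times S^{[n]}$ as an irreducible holomorphic symplectic manifold equipped with the projectively hyperholomorphic bundle $\cU^{[n]}$ of \cite{M24}. On $H^2$ the isometry $F_{\cU^{[n]}} = F_\cU \oplus \mathrm{id}_\delta$ is induced by the characteristic class of $\cU^{[n]}$, and the hypothesis that $\rho_2 \circ F_{\cU^{[n]}} \circ \rho_1^{-1}$ is again a rational Hodge isometry says exactly that, after transporting the markings by $(\rho_1, \rho_2)$, this class remains of Hodge type $(1,1)$ on the product. Thus the marked pair $(X, Y)$ lies in the same component of a moduli space $\mathfrak{M}_g$ of marked pairs as $(M^{[n]}, S^{[n]})$, for $g = F_{\cU^{[n]}}$, and along a path in that component Markman's theory guarantees that $\PP(\cU^{[n]})$ deforms to a holomorphic projective bundle on $X \times Y$. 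Lifting this projective bundle produces a twisted bundle $\cE$ on $X \times Y$, with Brauer twists $\alpha \in \Br(X)$ and $\beta \in \Br(Y)$ measuring the obstruction to the lift being honest. When the Hodge isometry fails to send a K\"ahler class to a K\"ahler class, I would first reduce to that situation using the decomposition into prime exceptional reflections, which act trivially on the relevant Brauer groups, and birational maps, which are handled by the birational case of \cite{MSYZ24} recorded in Theorem \ref{thm:genD}.

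Next I would check that $\Phi_\cE : D^b(X, \alpha) \to D^b(Y, \beta)$ is an equivalence and pin down the twists. At the base point $\cE$ specializes to $\cU^{[n]}$, which induces the equivalence \eqref{Deq}; since being an equivalence is open and closed in a flat family of kernels over the connected base, the general member $\Phi_\cE$ is an equivalence as well, and this is the combination of \cite[Theorem 2.3]{KK23} and \cite{MSYZ24} that I invoke. For the Brauer classes, the fibrewise slope $\tfrac{c_1(\cU)}{r}$ of the universal bundle contributes the $B$-fields $\tfrac{c_1(\cU|_{M \times \{x\}})}{r}$ and $\tfrac{c_1(\cU|_{\{y\} \times S})}{r}$ on the two factors, with opposite signs reflecting the source-versus-target asymmetry of the correspondence, while the BKR passage between a surface and its Hilbert scheme shifts each $B$-field by $-\tfrac{\delta}{2}$; transporting by $\rho_1, \rho_2$ then yields the two classes in the statement. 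Independence of $x$ and $y$ is automatic because $\mathrm{Pic}(S) = \ZZ H$ has rank one, so the fibrewise first Chern classes are rigid.

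The main obstacle is the deformation step: producing the twisted kernel $\cE$ on $X \times Y$ rests on Verbitsky's and Markman's theory of (projectively) hyperholomorphic sheaves, and it is here that the rational Hodge isometry hypothesis is indispensable in guaranteeing that the bundle deforms over the whole path. The second delicate point is ensuring that the limiting kernel still induces an equivalence rather than merely a functor; I treat the combination of \cite{MSYZ24} and \cite[Theorem 2.3]{KK23} that delivers this as a black box, since the bulk of the analytic and categorical work lives there.
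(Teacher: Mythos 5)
Your overall strategy (deform Markman's projectively hyperholomorphic bundle along the moduli space of marked pairs, then invoke \cite{MSYZ24} and \cite[Theorem 2.3]{KK23}) is the same machine the paper runs, but there is a genuine gap at the crucial step: you assert that the transported pair $(X,\eta_X,Y,\eta_Y)$, with $\eta_X=\eta_{M^{[n]}}\circ\rho_1^{-1}$ and $\eta_Y=\eta_{S^{[n]}}\circ\rho_2^{-1}$, ``lies in the same component of $\mathfrak{M}_g$ as $(M^{[n]},S^{[n]})$.'' The hypothesis of the theorem does not give this. First, it does not even guarantee membership in $\mathfrak{M}_g$: the definition of $\mathfrak{M}_g$ requires the induced Hodge isometry to send some K\"ahler class to a K\"ahler class, while the hypothesis only says $\rho_2\circ F_{\cU^{[n]}}\circ\rho_1^{-1}$ is a rational Hodge isometry. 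Second, and more seriously, even if the quadruple did lie in $\mathfrak{M}_g$, nothing forces it into the \emph{distinguished} connected component $\mathfrak{M}^0_g$ containing the base point $(M^{[n]},\eta_{M^{[n]}},S^{[n]},\eta_{S^{[n]}})$; yet the deformation of $\PP(\cU^{[n]})$ only propagates along paths inside that component. Your proposed remedy for the K\"ahler issue (prime exceptional reflections plus birational maps) presupposes a Hodge-preserving parallel transport between $X$ and a variety that you already know sits in $\mathfrak{M}^0_g$ over $Y$ --- which is precisely what is missing, so the reduction as stated is circular.

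The paper closes this gap differently: it never claims $(X,\eta_X,Y,\eta_Y)\in\mathfrak{M}^0_g$. Instead it uses Markman's surjectivity of the forgetful map $\Pi_2:\mathfrak{M}^0_g\to\mathfrak{M}_\Lambda$ (\cite[Lemma 5.7]{M24}) to lift $(Y,\eta_Y)$ to a quadruple $(X',\eta_{X'},Y,\eta_Y)\in\mathfrak{M}^0_g$ for some auxiliary marked variety $(X',\eta_{X'})$, to which the deformed-bundle equivalence of \cite[Theorem 1.3(b,c)]{MSYZ24} genuinely applies. Then it checks that $\rho'\circ\rho_1^{-1}=\eta_X^{-1}\circ\eta_{X'}$ is a parallel transport operator preserving Hodge structures --- here the hypothesis on $\rho_2\circ F_{\cU^{[n]}}\circ\rho_1^{-1}$ is used, via the period relation $\eta_Y^{-1}\circ g\circ\eta_{X'}(\sigma_{X'})=\sigma_Y$ defining $\mathfrak{M}^0_g$ --- and bridges $X$ to $X'$ by Theorem \ref{thm:genD}. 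Composing the two equivalences gives the statement. So the auxiliary $X'$ and the two-step composition are not optional bookkeeping; they are the mechanism that replaces your unjustified connectedness claim, and without them the deformation step of your argument does not get off the ground.
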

\begin{proof}
    By \cite{M24}, there exist markings $\eta_{M^{[n]}} $ and $\eta_{S^{[n]}}$ for the Hilbert schemes $M^{[n]}$ and $ S^{[n]}$, respectively, which induce an isometry $g=\eta_{S^{[n]}}\circ F_{\cU^{[n]}} \circ \eta_{M^{[n]}}^{-1} \in O(\Lambda_\mathbb{Q})$, such that 
\[(M^{[n]}, \eta_{M^{[n]}}, S^{[n]}, \eta_{S^{[n]}}) \in \mathfrak{M}^0_{g},\] where $\mathfrak{M}^0_{g}$ denotes  the distinguished connected component of $\mathfrak{M}_{g}$ in \cite[Theorem 1.3]{MSYZ24}.
There are natural forgetful maps for $i=1,~2:$ \[\Pi_i: \mathfrak{M}^0_{g}\to \mathfrak{M}_\Lambda,~~(X_1,\eta_1,X_2,\eta_2)\mapsto (X_i,\eta_i).\]
Markman proved in \cite[Lemma 5.7]{M24} that $\Pi_i$ is surjective. 
Now consider marked hyper-K\"ahler varieties $(Y,\eta_Y)$ and $(X,\eta_X)$,
where \[\eta_Y:=\eta_{S^{[n]}}\circ \rho_2^{-1} {\rm ~and~}\eta_{X}=\eta_{M^{[n]}}\circ \rho_1^{-1}\]
According to the surjectivity of $\Pi_2$, 
we obtain that $(Y,\eta_Y)$ lifts to \[(X',\eta_{X'},Y,\eta_Y)\in \mathfrak{M}^0_{g}\] for some other $(X',\eta_{X'})\in \mathfrak{M}_\Lambda.$ 
In what follows, we demonstrate that $X$ and $X'$ are birational.

Since $(X',\eta_{X'},Y,\eta_Y)$ and $(M^{[n]}, \eta_{M^{[n]}}, S^{[n]}, \eta_{S^{[n]}})$ are in the same connected component and $\rho_2=\eta_Y^{-1}\circ \eta_{S^{[n]}}$ is a parallel transport, we have \[\rho':=\eta_{X'}^{-1}\circ \eta_{M^{[n]}}:H^2(M^{{[n]}},\ZZ)\to  H^2(X',\ZZ)\] is also a  parallel transport. Therefore, we obtain a parallel transport between $X$ and $X'$: \[ \rho'\circ \rho_1^{-1} : H^2(X,\ZZ)\to H^2(X',\ZZ).\]
We now verify that $\rho'\circ \rho_1^{-1}$ preserves Hodge structures: 
A direct computation shows that
\[ \rho'\circ \rho_1^{-1}=\eta_X^{-1}\circ \eta_{X'}.\]
Since  $(X',\eta_{X'},Y,\eta_Y)\in \mathfrak{M}^0_{g}$, we have
\begin{equation}\label{h1}
    \eta_Y^{-1} \circ g \circ \eta_{X'}(\sigma_{X'})=\sigma_Y.
\end{equation}
By our assumption, the isometry
\[\rho_2\circ F_{\cU^{[n]}} \circ \rho_1^{-1}:H^2(X,\QQ)\to H^2(Y,\QQ)\] preserves Hodge structures. Thus, we obtain
\begin{equation}\label{h2}
    \eta_Y^{-1}\circ g\circ \eta_X(\sigma_X)=\rho_2\circ F_{\cU^{[n]}} \circ \rho_1^{-1}(\sigma_X)=\sigma_Y.
\end{equation}
Combining Equation \ref{h1} and \ref{h2}, we conclude  that \[\eta_X^{-1}\circ \eta_{X'}(\sigma_{X'})=\sigma_X\] and hence $\rho'\circ \rho_1^{-1}=\eta_X^{-1}\circ \eta_{X'}$ preserves Hodge structures.

 
Therefore, Theorem \ref{thm:genD} together with \cite[Theorem 1.3(b,c)]{MSYZ24} yields the desired twisted derived equivalence. 
\end{proof}
\subsection{Rational Hodge isometries between $H^2$}\label{subsec:Q-HI}
Let $X$ and $Y$ be two hyper-K\"ahler varieties of ${K3}^{[n]}$-type. There is a natural way to obtain a rational Hodge isometry between $H^2$ from the integral isometry of $\tilde{H}$, which is similar to \cite[Proposition 2.1]{B19}. 
Let \[A\in H^2(X,\QQ),~B\in H^2(Y,\QQ)\] be two $B$-fields and $\psi$ be a Hodge isometry \[\psi:\tilde{H}(X,A,\ZZ)\rightarrow \tilde{H}(Y,B,\ZZ).\]
Recall that as a lattice, $\tilde{H}(X,A,\ZZ)\cong H^2(X,\ZZ)\oplus U$ with $U$ generated by $e,~f$. We define \[r=r(\psi):= -\left<\psi(f).f\right>\in \ZZ.\]
Then we have \begin{equation}\label{eq:HK-vector}
    \psi(f)=re+m\beta+sf,{\rm~and}~\psi^{-1}(f)=re+m'\alpha+s'f
\end{equation}for some primitive class
 $\alpha\in H^2(X,\ZZ),~\beta\in H^2(Y,\ZZ)$ and the same $r$, where $r,~m,~s,~s'\in \ZZ$. We have the following two results.
\begin{lemma}\label{lem:2-B-fields}
    Let $X,~Y,~A,~B,~,\psi,~r,~m,~\alpha,~,\beta$ be as above. If $r\neq 0$, then we have \[\left[\frac{m'\alpha}{r}\right]=[A]\in \mathrm{Br}(X),~\left[\frac{m\beta}{r}\right]=[B]\in \mathrm{Br}(Y).\]
\end{lemma}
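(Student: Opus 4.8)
The plan is to exploit the only two pieces of structure carried by $\psi$: it is an isometry for the Mukai pairing, and it is a morphism of weight-$2$ Hodge structures, so it carries the line $\widetilde{H}^{2,0}(X,A)=\CC\cdot \bfe^A(\sigma_X)=\CC\cdot(\sigma_X+\left<A.\sigma_X\right>f)$ onto $\widetilde{H}^{2,0}(Y,B)=\CC\cdot(\sigma_Y+\left<B.\sigma_Y\right>f)$. Write $v_X:=\sigma_X+\left<A.\sigma_X\right>f$ and $v_Y:=\sigma_Y+\left<B.\sigma_Y\right>f$, so that $\psi(v_X)=\lambda v_Y$ for some $\lambda\in\CC^\ast$. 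The conceptual point is that the Brauer class $[B]$ is detected purely by the transcendental pairing $\left<B.\sigma_Y\right>$, so it suffices to show that $\left<B.\sigma_Y\right>$ and $\left<\tfrac{m\beta}{r}.\sigma_Y\right>$ agree.

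First I would pair $v_X$ with $f$. Since $H^2\perp U$ and $f^2=0$ we have $\left<v_X.f\right>=0$; as $\psi$ is an isometry this forces $\left<\psi(v_X).\psi(f)\right>=0$, that is $\left<v_Y.\psi(f)\right>=0$ after dividing by $\lambda\neq 0$. Expanding $\left<v_Y.\psi(f)\right>$ with $\psi(f)=re+m\beta+sf$ and the relations $e^2=f^2=0$, $\left<e.f\right>=-1$, $H^2\perp U$, the only surviving contributions are $m\left<\beta.\sigma_Y\right>$ (from $\sigma_Y$ against $m\beta$) and $-r\left<B.\sigma_Y\right>$ (from $\left<B.\sigma_Y\right>f$ against $re$, where the sign is the one coming from $\left<e.f\right>=-1$). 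Thus $m\left<\beta.\sigma_Y\right>-r\left<B.\sigma_Y\right>=0$, and since $r\neq 0$ this reads $\left<\,B-\tfrac{m\beta}{r}\,.\,\sigma_Y\,\right>=0$. A rational class orthogonal to $\sigma_Y$ is automatically orthogonal to $\bar\sigma_Y$ by reality of the pairing, hence of type $(1,1)$, so $B-\tfrac{m\beta}{r}\in\NS(Y)_\QQ$ by the Lefschetz $(1,1)$-theorem. Under the presentation $\Br(Y)=\big(H^2(Y,\ZZ)/\Pic(Y)\big)\otimes\QQ/\ZZ$ any $B$-field lying in $\NS(Y)_\QQ$ maps to $0$, and $C\mapsto[C]$ is additive, so $[B]=\left[\tfrac{m\beta}{r}\right]$ in $\Br(Y)$.

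The remaining identity $[A]=\left[\tfrac{m'\alpha}{r}\right]$ then follows by symmetry: $\psi^{-1}$ is again a Hodge isometry, from $\widetilde{H}(Y,B,\ZZ)$ to $\widetilde{H}(X,A,\ZZ)$, carrying $v_Y$ to a nonzero multiple of $v_X$, and its vector $\psi^{-1}(f)=re+m'\alpha+s'f$ has the same leading coefficient $r$ because the Mukai pairing is symmetric and $\psi$ is an isometry. Running the identical computation with the roles of $X$ and $Y$ exchanged gives $\left<A-\tfrac{m'\alpha}{r}.\sigma_X\right>=0$, whence the claim. I do not anticipate a genuine obstacle: the argument is a three-line pairing calculation, and the only points demanding care are the sign bookkeeping forced by $\left<e.f\right>=-1$ (which is exactly what produces the factor $-r$) and the invocation of the $(1,1)$-theorem needed to upgrade orthogonality to $\sigma_Y$ into membership in $\NS(Y)_\QQ$, the step that makes the difference of $B$-fields Brauer-trivial.
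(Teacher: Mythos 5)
Your proof is correct and follows essentially the same route as the paper: the paper notes that $f$ is a Hodge class, so $\psi(f)=re+m\beta+sf$ is orthogonal to $\bfe^B(\sigma_Y)$, which is exactly your pairing computation $\left<v_Y.\psi(f)\right>=0$, yielding $\left<m\beta-rB.\sigma_Y\right>=0$ and hence Brauer triviality of $\frac{m\beta}{r}-B$. Your additional remarks (reality of the pairing handling $\bar\sigma_Y$, Lefschetz $(1,1)$ upgrading orthogonality to membership in $\NS(Y)_\QQ$, and the equality of leading coefficients of $\psi(f)$ and $\psi^{-1}(f)$) simply make explicit steps the paper leaves implicit.
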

\begin{proof}
    Since $\psi$ is Hodge, we have $\psi(f)=re+m\beta+sf$ is a Hodge class, which implies that \[\left<m\beta-rB.\sigma_Y\right>=\left<m\beta-rB.\Bar{\sigma}_Y\right>=0.\] Hence \[\left[\frac{m\beta}{r}-B\right]=1\in  \mathrm{Br}(Y).\] The other equation is similar.
\end{proof}
\begin{proposition}\label{prop:diag}
    If $r\neq 0$, then the rational isometry ${\bfe}^{\frac{-m\beta}{r}}\circ \psi \circ {\bfe}^{\frac{m'\alpha}{r}}$ acts diagonally. More precisely, we have the following commutative diagram
    \begin{equation*}
        \begin{tikzcd}
{\tilde{H}(X,A,\ZZ)} \arrow[rr, "\psi"] \arrow[d, "{\bfe}^{-\frac{m'\alpha}{r}}"] &  & {\tilde{H}(Y,B,\ZZ)} \arrow[d, "{\bfe}^{-\frac{m\beta}{r}}"] \\
{H^2(X,\QQ)\oplus U_\QQ} \arrow[rr, "F_\psi\oplus \rho_{f-re}"]                      &  & {H^2(Y,\QQ)\oplus U_\QQ} .                                      \end{tikzcd}
    \end{equation*}
    Here, $F_\psi$ is a rational isometry and $\rho_{f-re}$ is the reflection in the vector $f-re.$
\end{proposition}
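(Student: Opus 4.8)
The plan is to verify the block‑diagonal form by a direct computation with the exponential and reflection formulas. Set \(\Psi := {\bfe}^{-m\beta/r}\circ \psi \circ {\bfe}^{m'\alpha/r}\). Being a composition of rational isometries, \(\Psi\) is a rational isometry of \(H^2(X,\QQ)\oplus U_\QQ\) onto \(H^2(Y,\QQ)\oplus U_\QQ\); so the whole statement reduces to understanding \(\Psi\) on the hyperbolic summand \(U_\QQ\). Concretely, I will show \(\Psi(f)=re\) and \(\Psi(e)=f/r\), and then check that these two identities are exactly the action of \(\rho_{f-re}\).

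First I would compute \(\Psi(f)\). Since \({\bfe}^{m'\alpha/r}\) fixes \(f\), we get \(\Psi(f)={\bfe}^{-m\beta/r}(\psi(f))={\bfe}^{-m\beta/r}(re+m\beta+sf)\). Applying the exponential formula, the \(H^2\)-component \(m\beta+r\cdot(-m\beta/r)\) cancels, and the coefficient of \(f\) becomes \(s-\tfrac{m^2\beta^2}{2r}\). The key point is that \(\psi(f)\) is isotropic (as \(f\) is and \(\psi\) is an isometry), and expanding \((re+m\beta+sf)^2=m^2\beta^2-2rs=0\) gives \(s=\tfrac{m^2\beta^2}{2r}\); hence \(\Psi(f)=re\).

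For \(\Psi(e)\) I would not compute directly, since no formula for \(\psi(e)\) is available; instead I use the symmetric identity \(\Psi^{-1}={\bfe}^{-m'\alpha/r}\circ\psi^{-1}\circ{\bfe}^{m\beta/r}\), which has exactly the same shape. Running the previous paragraph verbatim with \(\psi^{-1}(f)=re+m'\alpha+s'f\) (same leading coefficient \(r\), by \eqref{eq:HK-vector}) and its isotropy relation \(m'^2\alpha^2=2rs'\) yields \(\Psi^{-1}(f)=re\), that is \(\Psi(e)=f/r\). It remains to match this with the reflection: since \((f-re)^2=2r\), a one‑line computation gives \(\rho_{f-re}(e)=e-\tfrac{\langle e.\,f-re\rangle}{r}(f-re)=f/r\) and \(\rho_{f-re}(f)=re\), so \(\Psi|_{U_\QQ}=\rho_{f-re}\).

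Finally, since \(\Psi\) is a rational isometry carrying \(U_\QQ\) isomorphically onto \(U_\QQ\), it carries the orthogonal complement \(H^2(X,\QQ)=U_\QQ^{\perp}\) onto \(H^2(Y,\QQ)=U_\QQ^{\perp}\); defining \(F_\psi:=\Psi|_{H^2(X,\QQ)}\) gives the asserted block‑diagonal form \(\Psi=F_\psi\oplus\rho_{f-re}\), hence the commutative diagram. I expect no serious obstacle: the argument is bookkeeping with the two isotropy relations, the only non‑mechanical choice being to extract \(\Psi(e)\) from \(\Psi^{-1}(f)\) rather than directly. As a bonus one can upgrade \(F_\psi\) to a \emph{Hodge} isometry: Lemma \ref{lem:2-B-fields} gives \(\langle \sigma_X.\tfrac{m'\alpha}{r}\rangle=\langle A.\sigma_X\rangle\) and \(\langle \sigma_Y.\tfrac{m\beta}{r}\rangle=\langle B.\sigma_Y\rangle\), so \({\bfe}^{m'\alpha/r}(\sigma_X)={\bfe}^A(\sigma_X)\) while \({\bfe}^{-m\beta/r}\) undoes \({\bfe}^{B}\) on the transcendental part, whence \(\Psi(\sigma_X)\in\CC\,\sigma_Y\).
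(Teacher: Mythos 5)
Your proof is correct and follows essentially the same route as the paper's: verify that the conjugated isometry sends $f\mapsto re$ and $e\mapsto f/r$ (the paper states this as "we can check"), deduce that $U_\QQ$ and hence its orthogonal complement are preserved, and confirm the Hodge property of $F_\psi$ via Lemma \ref{lem:2-B-fields}. Your explicit computations — including the use of $\Psi^{-1}$ and the isotropy relations $m^2\beta^2=2rs$, $m'^2\alpha^2=2rs'$ to handle $\Psi(e)$ — simply fill in the details the paper leaves to the reader.
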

\begin{proof}
      We can check that ${\bfe}^{\frac{-m\beta}{r}}\circ \psi \circ {\bfe}^{\frac{m'\alpha}{r}}$ sends $e$ to $\frac{f}{r}$ and $f$ to $re$. Hence we also have \[{\bfe}^{\frac{-m\beta}{r}}\circ \psi \circ {\bfe}^{\frac{m'\alpha}{r}}(U_\QQ^\perp)=(U_\QQ^\perp).\] Then \[{\bfe}^{\frac{-m\beta}{r}}\circ \psi \circ {\bfe}^{\frac{m'\alpha}{r}}= F_\psi\oplus \rho_{f-re}\] for some isometry $F_\psi$. 
       To verify that $F_\psi$ is a Hodge isometry,  it suffices to observe the identity  \[\psi({\bfe}^{\frac{m'\alpha}{r}}(\sigma_X))={\bfe}^{\frac{m\beta}{r}}(\sigma_Y),\] which follows from Lemma \ref{lem:2-B-fields} and the fact that $\psi$ is a Hodge isometry between the twisted extended Mukai lattices.
\end{proof}
In conclusion, we have associated a rational Hodge isometry \begin{equation}\label{eq:def-f}
    F_\psi:H^2(X,\QQ)\to H^2(Y,\QQ)
\end{equation}
to any $\psi:\tilde{H}(X,A,\ZZ)\rightarrow \tilde{H}(Y,B,\ZZ)$ with $r(\psi)\neq 0.$ Geometrically, this corresponds to changing Chern class to $\kappa$ class,  as discussed in \cite[Introduction]{M24} and \cite[Proposition 2.1]{B19}. 
We have the following important example.
\begin{example}\label{ex:f}
    Let $S,~M$ be two $K3$ surfaces and there is a universal sheaf $\cU$ on $S\times M$, which induces a derived equivalence.
    Then we have Hodge isometries \[\psi_\cU:\tilde{H}(M,\ZZ)\to \tilde{H}(S,\ZZ),~ \psi_{\cU^{[n]}}:\tilde{H}(M^{[n]},\ZZ)\to \tilde{H}(S^{[n]},\ZZ).\]
    Then we can identify the isometry in \eqref{eq:hil} with the induced isometry in this subsection (c.f. \cite[Proposition 2.1]{B19} and \cite[Corollary 7.3]{M24}): \[F_{\cU}=F_{\psi_\cU},~ F_{\cU^{[n]}}=F_{\psi_{\cU^{[n]}}}.\]
    Moreover, since the inverse of $\psi_\cU$ is induced by $\cU^\vee$, we obtain 
    $$\psi_\cU(f)=(r,c_1(\cU),s){~\rm and~}\psi^{-1}_\cU(f)=(r,-c_1(\cU),s').$$
\end{example}

\subsection{A criterion of equivalence}
We now present a criterion for lifting an integral Hodge isometry between twisted extended Mukai lattices to a twisted derived equivalence.
Let the following data be as in the previous subsection:
\begin{itemize}
    \item a Hodge isometry $\psi:\tilde{H}(X,A,\ZZ)\rightarrow \tilde{H}(Y,B,\ZZ)$,
    \item $\psi(f)=re+m\beta+sf,{\rm~and}~\psi^{-1}(f)=re+m'\alpha+s'f$ with $r\neq 0$,
    \item the induced rational Hodge isometry $F_\psi:H^2(X,\QQ)\to H^2(Y,\QQ)$.
\end{itemize}
Denote by $ \Lambda_{K3}$ the unimodular $K3$-lattice. Let \[\Lambda = \Lambda_{K3} \oplus \ZZ \delta, ~ \delta^2 = 2-2n\] be the $K3^{[n]}$-lattice,
   which is isometric to $H^2(X, \ZZ)$, equipped with the BBF form.
Recall that a $\delta$-class is a class satisfying Lemma \ref{lem:delta}(1)(2).  
\begin{lemma}\label{lem:parallel}
    Let $\rho: H^2(X,\ZZ)\rightarrow H^2(Y,\ZZ)$ be an orientation-preserving integral isometry. Assume that  $\rho(\delta_v)=\delta_w$ for some $\delta$-classes. Then $\rho$ is a parallel transport operator.
\end{lemma}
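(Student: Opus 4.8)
The plan is to invoke the lattice-theoretic characterization of parallel transport operators recorded as property (4) of the Markman-Mukai lattice: since $\rho$ is assumed orientation-preserving, it is a parallel transport operator precisely when it admits a lift to an isometry $\phi\colon L(X)\to L(Y)$ of the full Markman-Mukai lattices. Thus the entire task reduces to extending $\rho$ across the rank-one orthogonal complement, and the obvious candidate is $\phi|_{H^2(X,\ZZ)}=\rho$ together with $\phi(v)=w$. This prescription is automatically a rational isometry because $v^2=w^2=2n-2$, so the real point is to show that it is \emph{integral} on all of $L(X)$.

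To control integrality I would use Nikulin's theory of overlattices. The lattice $L(X)\cong U^{\oplus4}\oplus E_8(-1)^{\oplus2}$ is unimodular, and $H^2(X,\ZZ)$ is a primitive sublattice whose orthogonal complement is $\ZZ v$ with $v^2=2n-2$. Hence $L(X)$ is an overlattice of $H^2(X,\ZZ)\oplus\ZZ v$ corresponding to the graph of a glue anti-isometry $\gamma_X\colon A_{H^2(X,\ZZ)}\to A_{\ZZ v}$ between the finite discriminant groups $A_{(-)}=(-)^*/(-)$, and likewise one has $\gamma_Y$ for $Y$. Unwinding condition (2) in the definition of a $\delta$-class, the integrality of $\tfrac{\delta_v-v}{2n-2}=\tfrac{\delta_v}{2n-2}-\tfrac{v}{2n-2}$ in $L(X)$ says exactly that $\gamma_X$ carries the class of $\tfrac{\delta_v}{2n-2}$ to the class of $\tfrac{v}{2n-2}$; the analogous statement holds for $\gamma_Y$ with $\delta_w$ and $w$.

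The extension $\phi$ exists if and only if the pair $(\rho,\,v\mapsto w)$ respects the glue, i.e.\ $\bar\sigma\circ\gamma_X=\gamma_Y\circ\bar\rho$ on the discriminant groups, where $\bar\rho,\bar\sigma$ are the induced maps. This is the step I expect to be the crux, and it is where both hypotheses enter. Condition (1), namely $\mathrm{div}(\delta_v)=2n-2$, forces the class of $\tfrac{\delta_v}{2n-2}$ to have order $2n-2$, hence to \emph{generate} the cyclic discriminant group $A_{H^2(X,\ZZ)}\cong\ZZ/(2n-2)$; so it suffices to check the identity on this single generator. Evaluating both sides, $\gamma_Y\bigl(\bar\rho(\overline{\delta_v/(2n-2)})\bigr)=\gamma_Y(\overline{\delta_w/(2n-2)})=\overline{w/(2n-2)}$ by the hypothesis $\rho(\delta_v)=\delta_w$ and the description of $\gamma_Y$, while $\bar\sigma\bigl(\gamma_X(\overline{\delta_v/(2n-2)})\bigr)=\bar\sigma(\overline{v/(2n-2)})=\overline{w/(2n-2)}$; the two agree.

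With the glue maps matched, Nikulin's criterion produces the integral isometry $\phi\colon L(X)\to L(Y)$ restricting to $\rho$ on $H^2(X,\ZZ)$, so $\rho$ lifts and, being orientation-preserving, is a parallel transport operator by property (4). The only genuine subtlety is the bookkeeping of discriminant forms and of the sign convention in the anti-isometry; once $\tfrac{\delta_v}{2n-2}$ is seen to generate the discriminant group, the whole compatibility collapses to a single evaluation, so I do not anticipate a real obstruction beyond this.
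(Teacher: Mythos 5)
Your proof is correct and follows essentially the same route as the paper: both reduce, via Markman's lattice-theoretic characterization of parallel transport operators (property (4)), to showing that the obvious rational lift sending $v\mapsto w$ is integral, and both deduce that integrality precisely from condition (2) of the $\delta$-classes. The only difference is presentational — you phrase the integrality check in Nikulin's discriminant-group/glue language, while the paper verifies the equivalent condition $\frac{k\delta_v+\ell v}{2n-2}\in L(X)\Leftrightarrow \frac{k\delta_w+\ell w}{2n-2}\in L(Y)$ by a direct elementary computation.
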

\begin{proof}
    The proof is essentially in \cite[Chapter 14, Proposition 2.6]{Huy16}. The isometry $\rho$ admits a natural rational lift  $\tilde{\rho}:L(X)_\QQ\to L(Y)_\QQ$, where \[\tilde{\rho}|_{H^2(X,\ZZ)}=\rho,~\tilde{\rho}(v)=w.\]
    By \cite[Theorem 9.8]{Markman11}, it suffices to show that $\tilde{\rho}$ is an integral isometry.
    Note that we have (and similarly for $Y$) \[H^2(X,\ZZ)^\vee/H^2(X,\ZZ)\cong \ZZ \frac{\delta_v}{2n-2},~ {\rm and ~} (\ZZ v)^\vee/\ZZ v \cong \ZZ\frac{v}{2n-2}.\] Since $\rho(\delta_v)=\delta_w$ and $\tilde{\rho}(v)=w$, it suffices to verify the following
    \begin{equation}\label{eq:parallel}
        \frac{k\delta_v+\ell v}{2n-2}\in L(X)\Leftrightarrow  \frac{k\delta_w+\ell w}{2n-2}\in L(Y),
    \end{equation}
    where $k,~\ell$ are integers. By Lemma \ref{lem:delta}(2), both $\frac{\delta_v-v}{2n-2}$ and $\frac{\delta_w-w}{2n-2}$ are integral classes. Therefore, \eqref{eq:parallel} is equivalent to \[\frac{(\ell+k)v}{2n-2}\in \ZZ v \Leftrightarrow \frac{(\ell+k)w}{2n-2}\in \ZZ w,\] which is obvious.

\end{proof}
We emphasize that the rank $r=r(\psi)\neq 0$ and begin by addressing an exceptional case.
\begin{lemma}\label{lem:-2ref}
     Let the settings be as above.
     Assume that ${\rm gcd}(r,m)=\ell$, which allows us to write \[\psi(f)=\ell(r_0+m_0\beta)+sf\] for coprime integers $r_0$ and $m_0$. Assume further that \[r_0|\frac{m_0^2\beta^2}{2}+1.\] Then we have a Hodge isometry  \[\varphi: H^2(X,\ZZ)\cong H^2(Y,\ZZ). \] 
\end{lemma}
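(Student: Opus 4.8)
The plan is to read the numerical hypothesis $r_0 \mid \tfrac{m_0^2\beta^2}{2}+1$ as exactly the condition guaranteeing the existence of an integral $(-2)$-class in $\tilde H(Y,B,\ZZ)$ manufactured from the isotropic vector $\psi(f)$, and then to reflect in that class to straighten $\psi$ into a map preserving the hyperbolic summand $U$, after which it descends to the desired Hodge isometry of $H^2$.

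First I would set $c:=\tfrac{1}{r_0}\bigl(\tfrac{m_0^2\beta^2}{2}+1\bigr)$, which is an integer precisely by the hypothesis, and define $w:=r_0 e+m_0\beta+cf\in\tilde H(Y,B,\ZZ)$. A direct computation gives $w^2=m_0^2\beta^2-2r_0c=-2$, so $w$ is automatically primitive. Since $f$ lies in $\tilde H^{1,1}$ and $\psi$ is a Hodge isometry, the vector $\psi(f)=\ell(r_0e+m_0\beta)+sf$ is a $(1,1)$-class; writing $w=\tfrac{1}{\ell}\psi(f)+(c-\tfrac{s}{\ell})f$ exhibits the integral class $w$ as a rational combination of the $(1,1)$-classes $\psi(f)$ and $f$, hence $w$ is an integral Hodge class. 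Therefore the reflection $R_w(x)=x+\left<x.w\right>w$ in the $(-2)$-class $w$ is an integral Hodge isometry of $\tilde H(Y,B,\ZZ)$.

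The crucial computation is that $R_w\circ\psi$ sends $f$ to a multiple of $f$. Using isotropy $\psi(f)^2=0$, which yields $\ell m_0^2\beta^2=2r_0s$, together with $r_0c=\tfrac{m_0^2\beta^2}{2}+1$, one finds $\left<\psi(f).w\right>=-\ell$, whence
\[
R_w(\psi(f))=\psi(f)+\left<\psi(f).w\right>w=\psi(f)-\ell w=(s-\ell c)f .
\]
Because $R_w\circ\psi$ is an integral isometry and $f$ is primitive, $(s-\ell c)f$ is primitive, forcing $s-\ell c=\pm1$; thus $\psi':=R_w\circ\psi$ sends $f\mapsto\pm f$. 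Since $\psi'$ preserves $f$ up to sign, it preserves $f^\perp=H^2(X,\ZZ)\oplus\ZZ f$ and descends to an integral isometry of $f^\perp/\ZZ f\cong H^2$ on each side. As the twisted $(2,0)$-line $\CC(\sigma_X+\left<A.\sigma_X\right>f)$ reduces to $\CC\sigma_X$ modulo $f$, this descent is precisely a Hodge isometry $\varphi\colon H^2(X,\ZZ)\cong H^2(Y,\ZZ)$.

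The main obstacle is the conceptual first step: recognizing that the divisibility hypothesis is exactly the existence criterion for the $(-2)$-class $w$, and that reflecting in it collapses the rank $r(\psi)$ to $0$ so that the extended isometry restricts to ordinary $H^2$. Once this is seen, the remaining work—integrality of $c$ and $w$, the Hodge property of $w$, the identity $\left<\psi(f).w\right>=-\ell$, and the primitivity argument forcing $s-\ell c=\pm1$—is routine bookkeeping.
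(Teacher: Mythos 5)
Your proof is correct and follows the same overall strategy as the paper's: use the divisibility hypothesis to manufacture an integral $(-2)$-class $r_0e+m_0\beta+cf$ out of the isotropic vector $\psi(f)$, check it is of type $(1,1)$, reflect in it so that $f$ goes to $\pm f$, and descend the composite isometry from $f^\perp/\ZZ f$ to a Hodge isometry of $H^2$. The one genuine difference is how the scalar in $R_w(\psi(f))=(s-\ell c)f$ gets pinned down to $\pm 1$. The paper argues arithmetically: from the isotropy relation $\ell m_0^2\beta^2/2=r_0s$ and the hypothesis it deduces $r_0\mid \ell$, then invokes $\gcd(r,s)=1$ to force $r_0=\pm\ell$ and $s=\pm m_0^2\beta^2/2$, whence $s-\ell t=\mp 1$. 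You instead observe that $R_w\circ\psi$ is a lattice isomorphism, hence carries the primitive vector $f$ to a primitive vector, which forces $s-\ell c=\pm 1$ at once. Your route is both shorter and more robust: the coprimality $\gcd(r,s)=1$ used in the paper is imported from the moduli-space setting of the earlier subsection and is not among the stated hypotheses of the lemma (primitivity of $\psi(f)$ only yields $\gcd(r,m,s)=1$ when combined with primitivity of $\beta$), so your primitivity argument actually closes a small gap. You also make explicit a verification the paper leaves implicit, namely that the $(-2)$-class is a Hodge class (being an integral rational combination of the $(1,1)$-classes $\psi(f)$ and $f$), which is needed for the reflection, and hence the descended isometry, to respect Hodge structures.
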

\begin{proof}
    We set \[t:=\frac{m_0^2\beta^2+2}{2r_0}\in \ZZ \] and then \[u:=r_0e+m_0\beta+tf\in \tilde{H}(Y,B,\ZZ) {~\rm satisfies~} u^2=-2. \] 
    Since $\psi(f)^2=0$, we have \[m_0^2\ell\frac{\beta^2}{2}=r_0s.\]
    Then the assumption $r_0|\frac{m_0^2\beta^2}{2}+1$ yields $r_0|\ell$. Furthermore, since ${\rm gcd}(r,s)={\rm gcd}(r_0\ell,s)=1$, we obtain $s|m_0^2\frac{\beta^2}{2}$.  It forces that \[r_0=\pm \ell,~s=\pm m_0^2\frac{\beta^2}{2}.\]
     Now let $\rho_{u}:\tilde{H}(Y,B,\ZZ)\to \tilde{H}(Y,B,\ZZ)$ be the reflection in the vector $u$. We can compute that \[\rho_{u}\circ \psi(f)=\mp f.\]
     Therefore, $\psi':=\rho_{u}\circ \psi$ induces the following Hodge isometry: \[\varphi: H^2(X,\ZZ)\cong f^\perp/f\xrightarrow[\simeq]{\rho_{u}\circ \psi} f^\perp/f  \cong H^2(Y,\ZZ),\] which concludes the proof. 
\end{proof}

\begin{theorem}\label{thm:criterion}
    Let the settings be as above. 
    Suppose that $\psi:\tilde{H}(X,A,\ZZ)\rightarrow \tilde{H}(Y,B,\ZZ)$ is orientation-preserving. If there exists a $\delta$-class $\delta_v\in H^2(X,\ZZ)$ such that \begin{equation}\label{eq:jiashe}
       \delta_w:=\psi(\delta_v)\in H^2(Y,\ZZ),
    \end{equation}
    and that $\delta_w$ is a $\delta$-class with respect to $w$,
    then we have a derived equivalence \footnote{Since $\left[\frac{\delta_v}{2}\right]=\left[-\frac{\delta_v}{2}\right]$, the sign in front of $\frac{\delta_v}{2}$ does not matter.}
    \footnote{In this case, the induced rational Hodge isometry $F_\psi$ is of cyclic type in the sense of \cite{M24} and the isometry $\psi$ can be lifted to $L(X)\oplus U\to L(Y)\oplus U$ with a suitable Hodge structure.}
    \[D^b(X,\left[A+\frac{\delta_v}{2}\right])\cong D^b(Y,\left[B+\frac{\delta_w}{2}\right])\]
\end{theorem}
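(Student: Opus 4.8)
The plan is to lift $\psi$ to a twisted derived equivalence by realizing the induced rational Hodge isometry $F_\psi$ of Proposition~\ref{prop:diag} through Markman's projectively hyperholomorphic bundle, and then invoking Theorem~\ref{thm:summary}. First I would record how $F_\psi$ interacts with the $\delta$-classes. Feeding $\delta_v\in H^2(X)\subset\tilde H(X,A)$ through the commutative diagram of Proposition~\ref{prop:diag} and using the hypothesis $\psi(\delta_v)=\delta_w$, the exponential operators contribute only $f$-components while the reflection $\rho_{f-re}$ sends $f$ to $re$; matching the $H^2$-, $e$- and $f$-parts then forces
\[F_\psi(\delta_v)=\delta_w,\qquad \langle\delta_v,\alpha\rangle=\langle\delta_w,\beta\rangle=0.\]
Thus $F_\psi$ respects the splitting-off of the $\delta$-line, exactly mirroring the block form $F_{\cU^{[n]}}=F_\cU\oplus\mathrm{id}_\delta$ of \eqref{eq:hil}. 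This is the structural reason $F_\psi$ is of cyclic type and is what makes the bundle realization possible.

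\textbf{Main case.} Write $\ell=\gcd(r,m)$ and $\psi(f)=\ell(r_0e+m_0\beta)+sf$ with $\gcd(r_0,m_0)=1$, and suppose the numerical condition \eqref{VB} holds. Then $(r,m\beta)$ is precisely the Mukai vector of a moduli space of stable vector bundles, so I would attach to it a $K3$ surface $S$, the moduli space $M$ of stable bundles, the universal bundle $\cU$, and the isometry $F_{\cU^{[n]}}$, whose numerical invariants match those of $\psi(f)$. Using that $F_\psi$ is a cyclic-type rational Hodge isometry with $F_\psi(\delta_v)=\delta_w$, together with the surjectivity of the forgetful maps $\Pi_i$ exploited in the proof of Theorem~\ref{thm:summary}, I would produce parallel transports $\rho_1\colon H^2(M^{[n]})\to H^2(X)$ and $\rho_2\colon H^2(S^{[n]})\to H^2(Y)$ with
\[\rho_2\circ F_{\cU^{[n]}}\circ\rho_1^{-1}=F_\psi,\qquad \rho_1(\delta_{M^{[n]}})=\pm\delta_v,\qquad \rho_2(\delta_{S^{[n]}})=\pm\delta_w,\]
the last two being compatible with the first because $F_{\cU^{[n]}}$ fixes the $\delta$-line. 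Theorem~\ref{thm:summary} then yields a twisted derived equivalence, and it remains to identify the two Brauer twists it outputs. By Example~\ref{ex:f} the Chern-class contribution $\tfrac{c_1(\cU)}{r}$ corresponds to $\tfrac{m\beta}{r}$ (and its dual to $\tfrac{m'\alpha}{r}$), so Lemma~\ref{lem:2-B-fields} gives $[\rho_2(\tfrac{c_1(\cU|_{\{y\}\times S})}{r})]=[B]$ and $[\rho_1(\tfrac{-c_1(\cU|_{M\times\{x\}})}{r})]=[A]$; combined with $\rho_i(\delta)=\pm\delta$ and the irrelevance of the sign of $\tfrac{\delta}{2}$, the outputs become exactly $[A+\tfrac{\delta_v}{2}]$ and $[B+\tfrac{\delta_w}{2}]$.

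\textbf{Degenerate case.} If \eqref{VB} fails, i.e.\ $r_0\mid\tfrac12 m_0^2\beta^2+1$, the bundle is unavailable and I would instead invoke Lemma~\ref{lem:-2ref}: the reflection in the $(-2)$-class $u$ replaces $\psi$ by $\psi'=\rho_u\circ\psi$ with $\psi'(f)=\mp f$, which descends to a Hodge isometry $\varphi\colon H^2(X,\ZZ)\cong H^2(Y,\ZZ)$. Since $u$ lies in the span of $\psi(f)$ and $f$, both of type $(1,1)$, the reflection $\rho_u$ is a Hodge isometry, so $\varphi$ is Hodge; and from $\langle\delta_w,\beta\rangle=0$ one gets $\rho_u(\delta_w)=\delta_w$, hence $\varphi(\delta_v)=\delta_w$. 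As $\psi$ is orientation-preserving and $\rho_u$ reflects a negative class, $\varphi$ is orientation-preserving, so Lemma~\ref{lem:parallel} shows $\varphi$ is a parallel transport operator. Theorem~\ref{thm:genD} then gives $D^b(X,[A+\tfrac{\delta_v}{2}])\cong D^b(Y,[\varphi(A+\tfrac{\delta_v}{2})])$, and tracking the Brauer class through the reflection with Lemma~\ref{lem:2-B-fields} (using $\varphi(\tfrac{\delta_v}{2})=\tfrac{\delta_w}{2}$) identifies the right-hand twist with $[B+\tfrac{\delta_w}{2}]$.

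\textbf{Main obstacle.} The crux is the realization step in the main case: simultaneously building $S,M,\cU$ with the correct numerics and choosing the parallel transports so that $\rho_2\circ F_{\cU^{[n]}}\circ\rho_1^{-1}$ equals $F_\psi$ on $H^2_\QQ$, preserves the $\delta$-classes, and carries the Chern-class twists to $A$ and $B$. This is exactly where the cyclic-type classification of \cite{M24} and the surjectivity of the maps $\Pi_i$ do the real work; the Brauer bookkeeping in both cases is routine but must respect the $\tfrac{\delta}{2}$-shift predicted by Beckman--Markman.
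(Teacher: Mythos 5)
Your overall strategy coincides with the paper's: in the generic case reduce to Theorem \ref{thm:summary} via Markman's bundle, and in the exceptional case use Lemma \ref{lem:-2ref}, Lemma \ref{lem:parallel} and Theorem \ref{thm:genD}. Your degenerate case is essentially the paper's Case 3, and it even correctly absorbs $r=\pm1$ (which the paper treats separately via birationality and Subsection \ref{subsec:birational}). The problem is the main case: you describe the plan but do not carry it out, and as written it contains a step that fails.

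Concretely, to invoke Theorem \ref{thm:summary} one needs an actual $K3$ surface $S$, a Mukai vector $v_0=(r,mH,s)$ with $H\in\Pic(S)$, $\gcd(r,s)=1$ and condition \eqref{VB}, a fine moduli space $M$ of stable vector bundles with universal bundle $\cU$, and parallel transports conjugating $F_{\cU^{[n]}}$ to $F_\psi$. None of this is produced by ``the surjectivity of the forgetful maps $\Pi_i$'': that surjectivity is internal to the proof of Theorem \ref{thm:summary} and presupposes the data $(S,M,\cU,\rho_1,\rho_2)$. Producing that data is the actual content of the paper's proof, and it has four parts you skip. (i) One must realize the image of $\beta$ under a marking as a Picard class on some $K3$ surface, which uses surjectivity of the period map, not properties of $\Pi_i$. (ii) Crucially, $\gcd(r,s)=1$ is \emph{not} automatic --- primitivity of $\psi(f)=re+m\beta+sf$ only gives $\gcd(r,m,s)=1$ --- so without first twisting by ${\bfe}^{k\gamma}$ (replacing $s$ by $s_k=s+km+rk^2\gamma^2/2$ and choosing $k$ by a prime-number argument so that $\gcd(r,s_k)=1$) there may be no universal bundle at all; your implicit assumption that $\cU$ exists for the untwisted vector $(r,m\beta,s)$ is the step that would fail. (iii) One must then prove that the abstract deformation partner $M$ is isomorphic to the moduli space $M_{v_0}$ (the paper's lattice argument $H^2(M)\cong f^\perp/f\cong v_0^\perp/v_0$) and that its points are locally free (Yoshioka's criterion, via \eqref{eq:yos}). (iv) Finally, the conjugated isometry $\psi'$ need not coincide with $\psi_\cU$ induced by the universal sheaf; they differ by ${\bfe}^L\circ\varphi$, so $\rho_1$ must be corrected to $\rho_1'=\rho_1\circ(\varphi^{-1}\oplus\id_\delta)$ before Theorem \ref{thm:summary} applies, and the class $L$ also enters the Brauer bookkeeping you call routine. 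Your proposal names (ii)--(iv) as ``the crux'' and ``the main obstacle'' but offers no argument for them, so the main case remains unproved.
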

\begin{proof}
   Our goal is to find a $K3$ surface $S$, an isotropic Mukai vector $v_0:=(r, mH, s)\in \tilde{H}(S,\ZZ)$, along with appropriate parallel transport operators, so that Theorem \ref{thm:summary} can be applied.
   By Equation \eqref{eq:HK-vector} and \eqref{eq:jiashe}, 
   we have 
   \begin{equation}\label{eq:div-1}
\left<\delta_v.\alpha\right>=0,~\left<\delta_w.\beta\right>=0,~F_\psi(\delta_v)=\delta_w.
   \end{equation}
   We treat the following three cases separately.
\medskip

 \noindent {\bf Case 1:  $r=\pm 1$.}
In this case, $F_\psi$ is an integral Hodge operator and preserves the orientation. Furthermore, it is a parallel transport operator by Lemma \ref{lem:parallel} and \eqref{eq:div-1}. Hence $X$ is birational to $Y$.
Moreover, by Lemma \ref{lem:2-B-fields}, both $[A]$ and $[B]$ are trivial. Then the assertion follows from Subsection \ref{subsec:birational}.

\noindent {\bf Case 2: $r\neq\pm 1$, general case.}
We assume that the phenomenon described in Lemma \ref{lem:-2ref} does not occur; that is,
   \begin{equation}\label{eq:yos}
       r_0\nmid \frac{m_0^2\beta^2}{2}+1.
   \end{equation}
The core idea is to construct $K3$ surfaces satisfying the conditions of Theorem \ref{thm:summary} and to check the corresponding Brauer classes. Both steps crucially depend on the assumption \eqref{eq:jiashe} concerning $\delta$-classes.  We emphasize that throughout the proof, all constructed isometries between the second cohomology groups $H^2$ preserve the various $\delta$.

\noindent{\it Step 1:} 
   If $r<0$, we consider $-\psi$ instead of $\psi.$ Therefore, we can assume that $r\geq 2$. 
 Take markings $\eta_X~,\eta_Y$ that satisfy \[\eta_X(\delta_v)=\delta,~\eta_Y(\delta_w)=\delta\]
   and thus we have the following commutative diagram 
   \[\begin{tikzcd}
{H^2(X,\QQ)} \arrow[d, "\eta_X"] \arrow[r, "F_\psi"]                          & {H^2(Y,\QQ)} \arrow[d, "\eta_Y"]              \\
{\Lambda_{K3,\QQ} \oplus \QQ \delta} \arrow[r, "g\oplus \id_\delta"] & {\Lambda_{K3,\QQ} \oplus \QQ \delta}
\end{tikzcd},\] for some isometry $g:\Lambda_{K3,\QQ}\to \Lambda_{K3,\QQ}$.
Due to \eqref{eq:div-1}, we have \[\eta_X(\alpha)\in \Lambda_{K3},~ \eta_Y(\beta)\in \Lambda_{K3}.\]

\noindent{\it Step 2:} The above isometry $g:\Lambda_{K3,\QQ}\to \Lambda_{K3,\QQ}$, combined with the global Torelli theorem, allows us to construct the $K3$ surfaces $S$ and $M$.
In this step, we select a distinguished $K3$ surface $S$;  the technical requirement is a coprimality condition on integers (imposed below), which will ultimately ensure in the next step that  $M$ is a fine moduli space of objects on $S$.

Take an integral vector $\gamma\in \Lambda_{K3}$ such that \[\left<\eta_Y(\beta).\gamma\right>=1.\] Recall that $\psi(f)=re+m\beta+sf$ is an isotropic vector. Let \[v_k:={\bfe}^{k\gamma}(re+m\eta_Y(\beta)+s)=re+(m\eta_Y(\beta)+rk\gamma)+(s+km+rk^2\cdot\frac{\gamma^2}{2})f\in \Lambda_{K3}\oplus U.\]
Define \[s_k:=s+km+rk^2\cdot\frac{\gamma^2}{2}.\] We claim that there exists $k\in \ZZ$ such that \[\mathrm{gcd}(r,s_k)=1.\]
In fact, we can take $k$ such that $\frac{s+km}{\mathrm{gcd}(s,m)}$ to be a large prime number and thus we have \[\mathrm{gcd}(r,s_k)=\mathrm{gcd}(r,s+km)=\mathrm{gcd}(r,\frac{s+km}{\mathrm{gcd}(s,m)})=1.\] The second equality is due to the fact that $re+m\beta+sf$ is primitive.
We write  \[\beta_k:= m\eta_Y(\beta)+rk\gamma.\]
We can find a $K3$ surface $S$ along with a marking \[\eta_S:H^2(S,\ZZ)\rightarrow \Lambda_{K3},~H\mapsto \beta_k\] such that $H\in \Pic(S)$, although $H$ is not necessarily primitive. By the surjectivity of the period map of $K3$ surfaces, we can find $(M,\eta_M)$ and $h$ with the following commutative diagram.
\[
\begin{tikzcd}
{H^2(M^{[n]},\QQ)} \arrow[d, "{\eta_{M^{[n]}}}"] \arrow[r, "h"]               & {H^2(S^{[n]},\QQ)} \arrow[d, "{\eta_{S^{[n]}}}"] \\
{\Lambda_{K3,\QQ} \oplus \QQ \delta} \arrow[r, "g\oplus \id_\delta"] & {\Lambda_{K3,\QQ} \oplus \QQ \delta},  
\end{tikzcd}
\]
where $\eta_{M^{[n]}}=\eta_M\oplus \id_\delta,~\eta_{S^{[n]}}=\eta_S\oplus \id_\delta$, and $h$ is a rational Hodge isometry. Here, by abuse of notation, $\id_\delta$ means sending $\delta_{M^{[n]}},~\delta_{S^{[n]}}$ to $\delta.$

\noindent{\it Step 3:} We now demonstrate that $M$ is a fine moduli space of stable vector bundles on $S$ with isotropic Mukai vector \[v_0:=(r, H,s_k)\in \tilde{H}(S,\ZZ).\] Let  $M_{v_0}$ denote this moduli space and define the operators \[\rho_1=\eta^{-1}_X\circ \eta_{M^{[n]}},~\rho_2=\eta^{-1}_Y\circ \eta_{S^{[n]}} .\] By Lemma \ref{lem:parallel},  $\rho_1,~\rho_2$ are parallel transport operators up to replacing $\eta_S,~\eta_M$ with $-\eta_S,~-\eta_M$.

To establish  $M_{v_0}\cong M$, we conjugate the diagram in Proposition \ref{prop:diag} via these parallel transports, yielding an integral Hodge isometry \begin{equation}\label{eq:psi'}
    \psi':\tilde{H}(M^{[n]},-\frac{m' \rho_1^{-1}(\alpha)}{r},\ZZ)\rightarrow \tilde{H}(S^{[n]},\ZZ),
\end{equation} where \[ \psi'={\bfe}^{\frac{m\rho_2^{-1}(\beta)}{r}+k\eta^{-1}_S(\gamma)}\circ (h\oplus \rho_{f-re}) \circ {\bfe}^{-\frac{m' \rho_1^{-1}(\alpha)}{r}}\]
and we have $h=F_{\psi'}$.
The Brauer class on $S^{[n]}$ vanishes because  \[\frac{\rho^{-1}_2(m\beta)}{r}=\frac{H}{r}-k\eta^{-1}_S(\gamma){\rm ~and~}H \in \Pic(S).\] 
Combining \eqref{eq:div-1} with our construction, we establish $\psi'(\delta_{M^{[n]}})=\delta_{S^{[n]}}$ and $\rho_1^{-1}(\alpha)\in \delta_{M^{[n]}}^\perp$. 
Restricting $\psi'$ to $\delta_{M^{[n]}}^\perp$, we obtain a Hodge isometry \[\psi'|_{\delta_{M^{[n]}}^\perp}:\tilde{H}(M,-\frac{m' \rho_1^{-1}(\alpha)}{r},\ZZ)\rightarrow \tilde{H}(S,\ZZ).\]
A direct computation confirms
\begin{equation}\label{eq:mukai}
    \psi'|_{\delta_{M^{[n]}}^\perp}(f)={\bfe}^{k\eta_S^{-1}(\gamma)}(r,m\rho_2^{-1}(\beta),s)=(r,H,s_k)=v_0\in \tilde{H}(S,\ZZ).
\end{equation}
Consequently, \[H^2(M)\cong f^\perp/f\cong v_0^\perp/v_0\cong H^2(M_{v_0}),\]
which implies
\[M\cong M_{v_0}.\] Moreover, the coprimality condition $\mathrm{gcd}(r,s_k)=1$ guarantees the existence of a universal sheaf $\cU$ on $M\times S$.
Finally, we have to show that $M$ consists of vector bundles on $S$. Write \[v_0=(\ell r_0,\ell H_0,s),~\ell r_0=r,~\ell H_0=H\] such that $(r_0,H_0,0)$ is primitive. Then by \eqref{eq:yos}, we have \[r_0\nmid \frac{(m_0\beta+r_0k\eta_Y^{-1}(\gamma))^2}{2}=\frac{H_0^2}{2}+1,\]
which by \cite[Lemma 1.2]{Yoshi09} implies that all stable sheaves on $S$ with Mukai vector $v_0$ are stable vector bundles.

As a summary, we obtain the following commutative diagram:
\[\begin{tikzcd}
{{H^2(M^{[n]},\QQ)}} \arrow[r, "F_{\psi'}\oplus \id_\delta"] \arrow[d, "\rho_1"] & {{H^2(S^{[n]},\QQ)}} \arrow[d, "\rho_2"] \\
{H^2(X,\QQ)} \arrow[r, "F_{\psi}"]                              & {H^2(Y,\QQ)},                           
\end{tikzcd}\]
Where $M$ is a fine moduli space of stable vector bundles on $S$. 

\noindent{\it Step 4:}
The universal sheaf $\cU$ gives another Hodge isometry:
\[\psi_\cU:\tilde{H}(M,\ZZ)\to \tilde{H}(S,\ZZ), \]
which induces another rational isometry \[F_{\psi_\cU}:H^2(M,\QQ)\to H^2(S,\QQ).\]
In prior, the isometry $\psi_\cU$ might not coincide with $\psi'$.
To apply Theorem \ref{thm:summary}, we must modify the parallel operators to replace $F_{\psi'}\oplus \id_\delta$ with $F_{\psi_\cU}\oplus \id_\delta$.

Due to \eqref{eq:mukai}, we obtain the following
 \[\psi^{-1}_\cU\circ \psi'(f)=f.\]
It yields that \[\psi_\cU^{-1}\circ \psi'(e)={\bfe}^L(e)\] for some $L\in H^2(M,\ZZ)$. It follows that
\begin{equation}\label{eq:two-iso}
    \psi'=  \psi_\cU \circ {\bfe}^L\circ\varphi,
\end{equation} where  $\varphi:H^2(M,\ZZ)\to H^2(M,\ZZ)$ is a Hodge isometry. Now we can compute that \[F_{\psi'}= F_{\psi_\cU}\circ\varphi.\]
Then we can replace $\rho_1$ with \[\rho_1':=\rho_1\circ (\varphi^{-1}\oplus \id_\delta)\] 
and we obtain the desired commutative diagram. 
\[\begin{tikzcd}
{{H^2(M^{[n]},\QQ)}} \arrow[r, "F_{\psi_\cU}\oplus \id_\delta"] \arrow[d, "\rho_1'"] & {{H^2(S^{[n]},\QQ)}} \arrow[d, "\rho_2"] \\
{H^2(X,\QQ)} \arrow[r, "F_{\psi}"]                              & {H^2(Y,\QQ)},                           
\end{tikzcd}\]

\noindent{\it Step 5:} Now we can check the Brauer classes.
We have constructed the ingredients $S,~v_0,M,~\rho'_1,~\rho_2$ in Theorem \ref{thm:summary}. 
Finally, the Brauer classes  
in the conclusion are due to  
Lemma \ref{lem:2-B-fields}, Lemma \ref{lem:delta}, and  
Brauer classes in Theorem \ref{thm:summary}.
For the $Y$ side, the Brauer class is 
\[\left[\rho_2(\frac{H}{r}-\frac{\delta_{S^{[n]}}}{2})\right]=\left[\frac{m\beta+rk\eta_X^{-1}(\gamma)}{r}-\frac{\delta_w}{2}\right]=\left[B+\frac{\delta_w}{2}\right].\]
For the $X$ side, note that by Example \ref{ex:f} and Equation \eqref{eq:psi'}, we have \[\psi'^{-1}(f)=re-m'(c_1(\cU|_{M\times \{x\}}))+s'f=re+m'\rho_1^{-1}(\alpha)+s'f\] for any $x\in S$. It implies that \[\rho_1^{-1}(\alpha)=-c_1(\cU|_{M\times \{x\}}).\]
Moreover, due to \eqref{eq:two-iso}, we have \[\psi^{-1}_{\cU^{[n]}}(f)=re+(-m'(\varphi\oplus \id)\circ \rho_1^{-1}(\alpha)+rL)+s''f\] for some integer $s''$. Thus, the Brauer class is
\[\left[\rho_1'(\frac{m'\rho_1'^{-1}(\alpha)+rL}{r}-\frac{\delta_{M^{[n]}}}{2})\right]=\left[\frac{m'\alpha}{r}-\frac{\delta_v}{2}\right]=\left[A+\frac{\delta_v}{2}\right].\]
This case is now concluded by Theorem \ref{thm:criterion}.

 \noindent {\bf Case 3: $r\neq\pm 1$, exceptional case. }
We assume that \[r_0\mid \frac{m_0^2\beta^2}{2}+1\] as in Lemma \ref{lem:-2ref}, which yields a $(-2)$-vector \[u=(r_0e+m_0\beta +tf)\in \tilde{H}(Y,B,\ZZ)\] such that \[\rho_{u}\circ \psi(f)=f\] up to composing $-\id$.
Let $\varphi$ be the Hodge isometry constructed in Lemma \ref{lem:-2ref}.  By an argument similar to that in Case 2, we have
\begin{equation}\label{eq:ftof}
    \psi':=\rho_{u}\circ \psi=(\varphi\oplus \id_U)\circ {\bfe}^L
\end{equation} for some $L\in H^2(Y,\ZZ).$
By the assumption $\delta_w\in H^2(Y,\ZZ)$ and \eqref{eq:div-1}, we have \[\left<u.\delta_w\right>=\left<m_0\beta.\delta_w\right>=0,\] which together with \eqref{eq:jiashe} yields \[\psi'(\delta_v)=\psi(\delta_v)=\delta_w.\] It forces \[ \left<L.\delta_w \right>=0~{\rm and~}\varphi(\delta_v)=\delta_w.\]
Then, by Lemma \ref{lem:parallel}, the Hodge isometry $\varphi$ constructed in Lemma \ref{lem:-2ref} is a parallel transport operator. Moreover, Equation \eqref{eq:ftof} implies that \[[B]=[\varphi(A)+L]=[\varphi(A)].\] Therefore, we obtain the desired derived equivalence by Theorem~\ref{thm:genD}.

The proof of Theorem \ref{thm:criterion} is now complete. 
\end{proof} 
\begin{remark}\label{rem:dual}
    If $\psi:\tilde{H}(X,A,\ZZ)\rightarrow \tilde{H}(Y,B,\ZZ)$ is orientation-reversing, we obtain \[D^b(X,\left[A+\frac{\delta_v}{2}\right])\cong D^b(Y,\left[-B+\frac{\delta_w}{2}\right]).\] The reason is that we can compose the following orientation-reversing Hodge isometry:
    \[\id_U\oplus -\id_{H^2(Y,\ZZ)}:\tilde{H}(Y,B,\ZZ)\to \tilde{H}(Y,-B,\ZZ).\]
\end{remark}

\subsection{Proof of Theorem \ref{thm:primitive-embedding}}
Let $X$ and $Y$ be two hyper-K\"ahler varieties of $K3^{[n]}$-type with $n\geq 2$, and let \[\phi:(L(X),v)\rightarrow (L(Y),w)\] be a Hodge isometry.
If $\phi(v)=w$, then $X$ is birational to $Y$, and the result follows from Subsection \ref{subsec:birational}. We therefore assume that $\phi(v)\neq w$. 

Since $v^2=w^2=2n-2\neq 0$, at least one of \[(\phi(v)-w)^2 {~\rm and ~}(\phi(v)+w)^2\] is nonzero. By composing $ \phi $ with the lattice automorphism $ (L(Y), w) \rightarrow (L(Y), -w) $, we may assume  
\[
\left< \phi(v). (\phi(v) - w) \right> = \frac{1}{2}(\phi(v) - w)^2 \neq 0.
\]  
The orientation is not essential to the proof since we have $\theta_v=-\theta_{-v}\in H^2(X,\mu_{2n-2})$.

Recall from Subsection \ref{subsec:proof-lattice}, the Hodge isometry \eqref{eq:5} 
\[
    \tilde{\psi}:{\bfe}^{\frac{\delta_w-w}{2n-2}} \circ E_{\tilde{\phi}(v)-w}\circ \tilde{\phi}\circ  {\bfe}^{\frac{v-\delta_v}{2n-2}} :L(X)\oplus U\to L(Y)\oplus U
\] satisfies $\tilde{\psi}(v)=w$.
Restricting $ \tilde{\psi} $ to $v^\perp $, we obtain a Hodge isometry  
\[\psi:=\tilde{\psi}|_{v^\perp}: \widetilde{H}(X,\frac{\delta_v}{2n-2},\ZZ)\cong \widetilde{H}(Y, \frac{\delta_w}{2n-2},\ZZ).\]
Let $r=r(\psi)$ and we can compute that \[r=r(\psi)=-\left<\psi(f).f\right>=\frac{1}{2}(\phi(v)-w)^2\neq 0.\] 
The rational Hodge isometry \( F_\psi \) from Proposition \ref{prop:diag} then takes the explicit form 
\begin{equation}\label{eq:F_psi}
    F_\psi=(\rho_{\phi(v)-w}\circ \phi)|_{H^2(X,\QQ)}:H^2(X,\QQ)\rightarrow H^2(Y,\QQ),
\end{equation}
where $\rho_{\phi(v)-w}:L(Y)_\QQ\rightarrow L(Y)_\QQ$ is the reflection in $\phi(v)-w$.

The construction of \( \psi \) depends on choices of \(\delta\)-classes. We impose the following:  
\begin{equation}\label{eq:key}
   {\bf Assumption :} \left<\phi(\delta_v).w\right>=-r {\rm~ for ~some ~choice~ of~} \delta_v. \tag{$\dagger$}
\end{equation}
Under this assumption, the class \[\phi(\delta_v)-\phi(v)+w\] lies in $w^\perp =H^2(Y,\ZZ)$ and satisfies the axioms of a $\delta$-class (Lemma \ref{lem:delta}(1)(2)).
Setting  \[\delta_w:=\phi(\delta_v)-\phi(v)+w,\] we obtain \[\psi(\delta_v) = \delta_w ,\] and Theorem \ref{thm:criterion} delivers the conclusion.  

Next,  we prove that we can choose $\delta_v$ to satisfy the Assumption \eqref{eq:key} when \[{\rm Span}\left(\phi(v),w\right)\subset L(Y)\] is a primitive lattice embedding. We choose two hyperbolic planes \[U_1\oplus U_2\subset L(Y)\] with generators $e_1,~f_1$ and $e_2,~f_2$. Here, $e_i^2=f_i^2=0,~\left<e_i.f_i\right>=-1.$
There is another primitive lattice embedding \[{\rm Span}(e_1-(n-1)f_1,e_1-(n-1-r)f_1-e_2+rf_2)\subset L(Y).\] 
The assignment 
\[g'(\phi(v))=e_1-(n-1)f_1,~g'(w)=e_1-(n-1-r)f_1-e_2+rf_2\] defines an isometry of primitive sublattices.
Then by the main theorem of \cite{james} or \cite[Chapter 14, Corollary 1.9]{Huy16}, the isometry $g'$ between two primitive sublattices extends to an isometry $g\in O(L(Y))$. In particular, we still have \[g(\phi(v))=e_1-(n-1)f_1,~g(w)=e_1-(n-1-r)f_1-e_2+rf_2.\] We can then take \[\delta_v=\phi^{-1}(g^{-1}(e_1+(n-1)f_1)).\] 
A direct calculation confirms   \[\left<\phi(\delta_v).w\right>=\left<(e_1+(n-1)f_1.(e_1-(n-1-r)f_1-e_2+rf_2)\right>=-r,\]
verifying Assumption \eqref{eq:key}.

Now we complete the proof of Theorem \ref{thm:primitive-embedding}.

\section{Proof of Theorem \ref{thm:k3}}
 From now on, we fix a projective $K3$ surface $S$ and $H\in \mathrm{Pic}(S)$ be a primitive class (not necessarily ample) with $H^2=2d$.
The Markman-Mukai lattice of any moduli space of stable objects is identified with $\widetilde{H}(S,\ZZ)$, although  the orientation given by Subsection \ref{subsec:oriented} might be different from the classical one.
 Let $n>1$ and let \[v=(1,0,1-n),~ w=(r,kH,s)\in \widetilde{H}^{1,1}(S,\ZZ)\] be two primitive Mukai vectors with the same square for some $k\in \ZZ$. 
Then we have
\begin{equation}\label{eq:norm}
    \frac{w^2}{2}=k^2d-rs=n-1>0.
\end{equation}
The strategy is to find a Hodge isometry \[\phi: \widetilde{H}(S,\ZZ)\rightarrow \widetilde{H}(S,\ZZ)\] such that \[{\rm Span}\left(\phi(w),v\right)\subset \widetilde{H}(S,\ZZ)\] is a primitive lattice embedding. Then Theorem \ref{thm:primitive-embedding} will imply Theorem \ref{thm:k3}.
 \subsection{Primitive embedding}
We now give a criterion of primitive lattice embedding.
\begin{lemma}\label{lem:primitive}
Assume that $w\neq v$.
    The lattice embedding $\left<w,v\right>\subset \widetilde{H}(S,\ZZ)$ is  primitive if $\mathrm{gcd}\left((r^2-1)s,k\right)=1$.
\end{lemma}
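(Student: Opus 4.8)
The plan is to reduce the primitivity of the rank-two sublattice $\langle w,v\rangle$ to a primitivity statement about a single vector in the exterior square, where the condition becomes a transparent $\gcd$. Recall that for linearly independent $w,v$ the sublattice $\langle w,v\rangle\subset\Lambda$ is primitive exactly when the bivector $w\wedge v$ is primitive in $\wedge^2\Lambda$, i.e.\ when the $\gcd$ of the $2\times 2$ minors (the Pl\"ucker coordinates of $w\wedge v$) equals $1$; I will check linear independence along the way. The key structural input I would use is that, as an abelian group, $\widetilde{H}(S,\ZZ)=H^0(S,\ZZ)\oplus H^2(S,\ZZ)\oplus H^4(S,\ZZ)$ with $H^0,H^4\cong\ZZ$ generated by $\mathbf 1$ and $\omega$, and with $H^2(S,\ZZ)$ \emph{unimodular}. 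Since $H$ is primitive, it extends to a $\ZZ$-basis $H=e_1,e_2,\dots,e_{22}$ of $H^2(S,\ZZ)$, so that $\mathbf 1,e_1,\dots,e_{22},\omega$ is a basis of $\widetilde{H}(S,\ZZ)$ in which the minors are read off directly.

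With respect to this basis $v=\mathbf 1+(1-n)\omega$ and $w=r\mathbf 1+kH+s\omega$ are supported only on $\mathbf 1,H,\omega$, so every $2\times 2$ minor involving some $e_i$ with $i\geq 2$ vanishes. A short computation then shows that the only possibly nonzero Pl\"ucker coordinates of $w\wedge v$ are
\[ \mathbf 1\wedge H:\ -k,\qquad \mathbf 1\wedge\omega:\ r(1-n)-s,\qquad H\wedge\omega:\ k(1-n). \]
Under the hypothesis $\gcd\big((r^2-1)s,k\big)=1$ these are not all zero, so $w\wedge v\neq 0$ and $w,v$ are linearly independent; the embedding indeed has rank two. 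Since $k\mid k(1-n)$, the $\gcd$ of the three minors collapses to $\gcd\big(k,\,r(1-n)-s\big)$, and $\langle w,v\rangle$ is primitive if and only if this equals $1$.

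The main point---and the only step that is not bookkeeping---is to reconcile $\gcd\big(k,r(1-n)-s\big)$ with the stated condition, which forces me to invoke the hypothesis $w^2=v^2$. Concretely, $w^2=v^2=2n-2$ reads $k^2d-rs=n-1$ (equation \eqref{eq:norm}), so $1-n=rs-k^2d$ and therefore
\[ r(1-n)-s=r(rs-k^2d)-s=(r^2-1)s-rk^2d\equiv (r^2-1)s \pmod{k}. \]
Hence $\gcd\big(k,r(1-n)-s\big)=\gcd\big((r^2-1)s,k\big)$, and the assumption $\gcd((r^2-1)s,k)=1$ yields primitivity. This argument in fact gives an ``if and only if'', though only one direction is needed here. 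I expect the equal-square substitution to be the crux: before invoking $k^2d-rs=n-1$ the $\gcd$ coming directly from the minors does not visibly match the form asserted in the lemma, and it is precisely the constraint $w^2=v^2$ that converts $r(1-n)-s$ into $(r^2-1)s$ modulo $k$.
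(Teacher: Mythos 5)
Your proof is correct, and it reaches the same arithmetic crux as the paper's proof via a different framework. The paper argues directly from the definition of primitivity: writing an arbitrary rational combination as $\frac{1}{k}(xw+yv)$, it uses primitivity of $H$ and integrality of the $H^0$- and $H^4$-components to force $x,y\in\ZZ$, $k\mid rx+y$ and $k\mid sx+(1-n)y$, combines these to get $k\mid (r(n-1)+s)x$, and then substitutes $n-1=k^2d-rs$ to obtain $k\mid (1-r^2)sx$, whence $k\mid x$ and then $k\mid y$. You instead invoke the determinantal-divisor criterion (a rank-two sublattice is primitive iff the gcd of the $2\times 2$ minors of its generators equals $1$) and compute the three possibly nonzero Pl\"ucker coordinates $-k$, $r(1-n)-s$, $k(1-n)$ in a basis of $\widetilde{H}(S,\ZZ)$ adapted to $H$; the same substitution $n-1=k^2d-rs$ then identifies the gcd of the minors with $\gcd\left((r^2-1)s,k\right)$, which is exactly the quantity $-(r(n-1)+s)$ of the paper reduced modulo $k$. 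Both arguments are sound and of comparable length; yours has the advantage of cleanly isolating linear independence (nonvanishing of $w\wedge v$) and of giving the gcd identity unconditionally, hence the converse essentially for free, while the paper's is basis-free and needs no Smith-normal-form input. One small caveat on your closing remark: the ``if and only if'' also requires $w\neq -v$, since for $w=-v$ the span is the primitive rank-one lattice $\ZZ v$ while all minors vanish; this degenerate case is harmless here because it forces $\gcd\left((r^2-1)s,k\right)=0$, so it is excluded by the hypothesis (as is $k=0$ altogether, by the relation $k^2d-rs=n-1>0$).
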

\begin{proof}
    It suffices to show that if a linear combination of $v$ and $w$ with rational coefficients is an integral class in $\widetilde{H}(S,\ZZ)$, then the coefficients must also be integral.
    Assume that \[\frac{1}{k}(xw+yv)\in \widetilde{H}(S,\ZZ){\rm ~for ~some}~x,~y\in\QQ.\] Since the classes $v,~,w$ and $H$ are all primitive, we have \[x,~y\in\ZZ,~k\mid rx+y,~k\mid sx+(1-n)y.\]
    By taking a linear combination, we obtain \[k\mid (r(n-1)+s)x.\]
    Applying \eqref{eq:norm} to substitute $n-1$
    , we get \[k\mid (1-r^2)sx.\] The if $\mathrm{gcd}\left((r^2-1)s,k\right)=1$ we must have $k\mid x$. Then $k$ also divides $y$ since $v$ is primitive. 
\end{proof}
\subsection{End of proof}
If $r=\pm 1$, then ${\bfe}^{\pm kH}(w)=\pm v$ for some $k$ and the result follows from Subsection \ref{subsec:birational}.
For $r\neq \pm 1$, we complete the proof by showing that there exists an integer $t$, such that \[{\rm Span}\left({\bfe}^{tH}(w),v\right)\subset \widetilde{H}(S,\ZZ)\] is a primitive lattice embedding. We compute that \[{\bfe}^{tH}(w)=(r,(k+rt)H,s+2ktd+rt^2d).\]
\begin{lemma}\label{lem:t}
    There exists $t\in \ZZ$ such that \[\mathrm{gcd}\left((r^2-1)(s+2ktd+rt^2d),k+rt\right)=1.\]
\end{lemma}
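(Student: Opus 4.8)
The plan is to unwind the statement to a pure divisibility assertion and then isolate the troublesome primes. Writing $k_t := k+rt$ and $s_t := s + 2ktd + rt^2 d$ for the relevant coefficients of $\bfe^{tH}(w)=(r,(k+rt)H,s_t)$, the goal is to produce $t$ for which no prime divides both $k_t$ and $(r^2-1)s_t$. Two structural inputs will drive everything: the primitivity of $w=(r,kH,s)$, which (as $H$ is primitive) amounts to $\gcd(r,k,s)=1$, and the norm relation \eqref{eq:norm}, namely $k^2 d - rs = n-1$.

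The key observation is a congruence that pins down the dangerous primes. Suppose a prime $p$ divides $k_t$, so that $rt \equiv -k \pmod p$. Substituting $rt\equiv -k$ gives
\[
r\,s_t \;=\; rs + 2kd\,(rt) + d\,(rt)^2 \;\equiv\; rs - 2k^2 d + k^2 d \;=\; rs - k^2 d \;=\; -(n-1) \pmod p .
\]
Hence if in addition $p \mid s_t$, then $p \mid (n-1)$. Combining this with the trivial alternative $p\mid (r^2-1)$, I conclude that \emph{any} prime dividing both $k_t$ and $(r^2-1)s_t$ must divide the fixed nonzero integer $D := (r^2-1)(n-1)$; here $D\neq 0$ because $r\neq\pm 1$ and $n>1$. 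The crucial point is that $D$ does not depend on $t$, so the quadratic growth of $s_t$ has been neutralized.

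It then suffices to choose $t$ so that none of the finitely many primes $p\mid D$ is a genuine common factor. For $p\mid D$ with $p\mid r$: if $p\mid k$ then $p\nmid s$ by primitivity and $s_t\equiv s\pmod p$, so $p$ divides neither $r^2-1$ nor $s_t$ and is harmless; if $p\nmid k$ then $k_t\equiv k\not\equiv 0\pmod p$, so $p\nmid k_t$. For $p\mid D$ with $p\nmid r$, the congruence $k+rt\equiv 0\pmod p$ has the single solution $t\equiv -k r^{-1}\pmod p$, which I simply forbid. Each such prime excludes exactly one residue class, so the Chinese Remainder Theorem yields a residue class of $t$ modulo $\prod_{p\mid D,\,p\nmid r}p$ with $p\nmid k_t$ for every candidate prime; picking a representative with $k_t\neq 0$ (possible since $r\neq 0$ makes $k+rt=0$ for at most one $t$, while $r=0$ forces $k_t=k\neq 0$) gives $\gcd\big((r^2-1)s_t,\,k_t\big)=1$, matching the criterion of Lemma \ref{lem:primitive}. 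The only real obstacle is precisely the quadratic dependence of $s_t$ on $t$, which blocks a naive "make $k_t$ a large prime" approach; the displayed congruence is what overcomes it, after which the construction of $t$ is a routine CRT argument.
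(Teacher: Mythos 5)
Your proof is correct, and it takes a genuinely different route from the paper's. The paper's own argument splits off $\ell=\gcd(k,r)$, writes $k+rt=\ell(k'+r't)$ with $\gcd(k',r')=1$, and invokes Dirichlet's theorem on primes in arithmetic progressions to get infinitely many $t$ for which $k'+r't$ is a large prime coprime to $r^2-1$; the residual possibility that this prime divides $s+ktd$ for all such $t$ is then killed by a growth argument, forcing $(s+ktd)/(k'+r't)=kd/r'$ and hence $rs=k^2d$, contradicting \eqref{eq:norm}. You replace both ingredients by an elementary observation: for any prime $p\mid k+rt$ one has $rs_t\equiv rs-k^2d=-(n-1)\pmod{p}$, so every common prime factor of $k+rt$ and $(r^2-1)s_t$ divides the fixed nonzero integer $D=(r^2-1)(n-1)$, after which finitely many congruence conditions on $t$ (CRT for the primes $p\mid D$ with $p\nmid r$, while the primes dividing $r$ are disposed of using $\gcd(r,k,s)=1$) finish the job. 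Both proofs ultimately rest on the same two inputs, primitivity of $w$ and the norm relation \eqref{eq:norm}, but yours is more elementary — no Dirichlet — and effective, in that it exhibits an explicit arithmetic progression of admissible $t$ rather than appealing to the infinitude of primes in a progression; the paper's version, by contrast, needs no case analysis on the prime divisors of $D$. Your handling of the edge cases ($p\mid r$ via primitivity, and $k+rt=0$) is also correct.
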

\begin{proof}
    We define $\ell: = \mathrm{gcd}\left(k,r\right)$, which allows us to write
\[r=r'\ell,~k=k'\ell,~\mathrm{gcd}\left(k',r'\right)=1.\] Since $\mathrm{gcd}\left(r,k,s\right)=1$ and $\ell\mid r$, we have \[\mathrm{gcd}\left(s,\ell\right)=1,~\mathrm{gcd}\left(r^2-1,\ell\right)=1.\]
Therefore, there exists an infinite set $I\subset \ZZ$ such that for any $t\in I$, \[k'+r't {\rm ~is ~a ~prime~number~and~} \mathrm{gcd}\left(r^2-1,k+rt\right)=1.\] We now obtain \[\mathrm{gcd}\left((r^2-1)(s+2ktd+rt^2d),k+rt\right)=\mathrm{gcd}\left(s+ktd,k'+r't\right)\] for any $t\in I$. 

We claim that there exists $t\in I$ such that $\mathrm{gcd}\left(s+ktd,k'+r't\right)=1$. If not, for $t\in I$ sufficiently large, we must have \[\frac{s+ktd}{k'+r't}=\frac{kd}{r'}\in \ZZ\] 
since $k'+r't$ is a prime number for any $t\in I$.
This leads to \[rs=k^2d,\] which contradicts \eqref{eq:norm}.

\end{proof}
In conclusion, there is a Hodge isometry ${\bfe}^{tH}: \widetilde{H}(S,\ZZ)\to \widetilde{H}(S,\ZZ)$ such that
\[{\rm Span}\left({\bfe}^{tH}(w),v\right)\subset \widetilde{H}(S,\ZZ)\] is a primitive lattice embedding by Lemma \ref{lem:primitive} and Lemma \ref{lem:t}. 
Then we can apply Theorem \ref{thm:primitive-embedding} to the triples $(v,w,{\bfe}^{tH})$ and $(-v,w,{\bfe}^{tH})$. Then the proof of Theorem \ref{thm:k3} is complete.
\qed

Finally, we discuss Corollary \ref{cor:fine}.
When $n=2$, the assertion follows from Theorem \ref{thm:k3} directly. When $M_v$ is birational to a fine moduli space of stable sheaves, the result is implied by Theorem \ref{thm:k3}, Lemma \ref{lem:obs} and Subsection \ref{subsec:birational}.
\qed
\bibliographystyle {plain}
\bibliography{ref}
\end{document}